\title{Some combinatorial properties of splitting trees}
\author{Jonathan Schilhan}
\thanks{The author was partially supported through START-Project Y1012-N35 of the Austrian Science Fund, FWF, and by a UKRI Future Leaders Fellowship MR/T021705/1}
    \DeclareMathOperator{\term}{term}
    \DeclareMathOperator{\wsacks}{wSacks}
    \DeclareMathOperator{\cont}{cont}
    \DeclareMathOperator{\baire}{Baire}
    \DeclareMathOperator{\borel}{Borel}
\theoremstyle{plain}
\newtheorem{thm}{Theorem}[section]
\newtheorem{lemma}[thm]{Lemma}
\theoremstyle{definition}
\newtheorem{definition}[thm]{Definition}
\newtheorem{remark}[thm]{Remark}
\begin{document}

\begin{abstract}
We show that splitting forcing does not have the weak Sacks property below any condition, answering a question of Laguzzi, Mildenberger and Stuber-Rousselle. We also show how some partition results for splitting trees hold or fail and we determine the value of cardinal invariants after an $\omega_2$-length countable support iteration of splitting forcing.  
\end{abstract}

\maketitle

\section{Introduction}

We will study some properties of \emph{splitting trees} and the associated \emph{splitting forcing} (see Definition~\ref{def:hittingtree}). This is a forcing notion that gives a natural way to add a \emph{splitting real} (see more below) generating a minimal extension of the ground model (see \cite[Corollary 4.21]{Schilhan2020} and also \cite{Spinas2007}). Splitting trees are part of a more general class of perfect trees that appeared in \cite[Definiton 2.11]{Shelah1992}. Recall that a set $T \subseteq \omega^{<\omega}$ is a \emph{tree} if it is closed under initial segments and that we denote with $[T]$ the set of all infinite branches through $T$. More precisely, $[T] = \{x \in \omega^\omega : \forall n \in \omega (x \restriction n \in T) \}$. The elements of $T$ are often called \emph{nodes}. A node $t \in T$ is called a \emph{terminal node} of $T$ if it has no proper extension in $T$, i.e. there is no $s \in T$ such that $t \subsetneq s$. The set of terminal nodes of $T$ is denoted $\term(T)$. We say that $t_0,t_1$ are incompatible, or write $t_0 \perp t_1$, to say that neither $t_0 \subseteq t_1$ nor $t_1 \subseteq t_0$. Finally, for any $t \in T$, we define the restriction of $T$ to $t$ as $T \restriction t = \{ s \in T : s \not\perp t \}$. 

\begin{definition}
Let $a \subseteq \omega$ and $X \subseteq 2^{\leq \omega}$. Then we say that $X$ \emph{covers} $a$ if for every $n \in a$ and $i \in 2$, there is $s \in X$ with $s(n) = i$. 
\end{definition}

\begin{definition}[Splitting tree]\label{def:hittingtree}
Let $T \subseteq 2^{<\omega}$ be a tree. Then $T$ is a \emph{splitting tree} if for every $t \in T$, $T \restriction t$ covers a cofinite subset of $\omega$. We write $\mathbb{SP}$ for the forcing notion consisting of splitting trees ordered by inclusion. 
\end{definition}

Recall that for $x,y \in [\omega]^\omega$, we say that $x$ \emph{splits} $y$ if $y \cap x$ as well as $y \setminus x$ are infinite. A real $x \in [\omega]^\omega$ is called \emph{splitting} over a model $V$ if for every $y \in V \cap [\omega]^\omega$, $x$ splits $y$. We will often identify sets of naturals with their characteristic functions in $2^\omega$. Thus we may sometimes say things such as $x \in 2^\omega$ splits $y \in [\omega]^\omega$, when we mean that $x^{-1}(1)$ splits $y$. Then we see that $\mathbb{SP}$ adds a generic splitting real over the ground model $V$. Namely, if $y \in [\omega]^\omega$, $n \in \omega$, $i \in 2$ and $T \in \mathbb{SP}$ are arbitrary, we find $s \in T$, such that $s(m)= i$ for some $m \in y \setminus n$. Thus, passing to the splitting tree $T \restriction s \leq T$, we force that the generic real intersects or avoids $y$ above $n$, depending on the choice of $i$.

Shelah showed in \cite{Shelah1992} that $\mathbb{SP}$ is proper and $\omega^\omega$-bounding. Moreover it has the continuous reading of names (see e.g. \cite{Spinas2007}). This is saying that for any $\mathbb{SP}$ name $\dot y$ for an element of $\omega^\omega$ and a condition $T \in \mathbb{SP}$, there is $S \leq T$ and a continuous function $f \colon [S] \to \omega^\omega$ such that $S \Vdash \dot y = f(\dot x_{\operatorname{gen}})$, where $\dot x_{\operatorname{gen}}$ is a name for the generic real added by $\mathbb{SP}$. Together with the minimality of the forcing extension this makes splitting forcing very similar to Sacks forcing. In \cite{LMSR}, the authors ask the natural question of whether splitting forcing also has the \emph{Sacks property} (see Definition~\ref{def:sacks} below). This alone can in fact already be answered using an earlier result of Zapletal (see Remark~\ref{rem:zapsacks}). But we will show in Section 2 that $\mathbb{SP}$ does not even have the \emph{weak Sacks property} (Definition~\ref{def:weaksacks}) below any of its conditions. 

The term ``splitting tree" probably first appeared in \cite{Spinas2004}, where they were introduced to give a topological characterization of analytic \emph{$\omega$-splitting families}, similar to the existing ones, e.g. for unbounded families (see \cite{Kechris1977}) or dominating families (see \cite{Spinas1994, Brendle1995}). Recall that a \emph{splitting family} is a set $\mathcal{S} \subseteq [\omega]^\omega$ so that for every $y \in [\omega]^\omega$ there is $x \in \mathcal{S}$ splitting $y$. Moreover $\mathcal{S} \subseteq [\omega]^\omega$ is called \emph{$\omega$-splitting} if for every countable $H \subseteq [\omega]^\omega$, there is $x \in \mathcal{S}$ simultaneously splitting every member of $H$. Again, identifying subsets of $\omega$ with characteristic functions, we may talk about subsets of $2^\omega$ being splitting or $\omega$-splitting. The main result of \cite{Spinas2004} is the following. 

\begin{thm}[{\cite[Theorem 1.2]{Spinas2004}}]\label{thm:analyticsplitting}\label{thm:spinas}
An analytic set $A \subseteq 2^\omega$ is $\omega$-splitting if and only if it contains the branches of a splitting tree. 
\end{thm}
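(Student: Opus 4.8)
The plan is to prove the two directions separately, with the forward (easy) direction by a diagonalization through the tree and the reverse (hard) direction by a fusion argument organized around the $\sigma$-ideal of non-$\omega$-splitting sets.

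For the easy direction, suppose $[T] \subseteq A$ for a splitting tree $T$ and let $H = \{y_k : k \in \omega\} \subseteq [\omega]^\omega$ be countable. I would build a branch $x \in [T]$ by recursion, maintaining a current node $t \in T$ and running a bookkeeping over all pairs $(k,i) \in \omega \times 2$ infinitely often. To meet $(k,i)$, I use that $T \restriction t$ covers a cofinite set, so for all large enough $n \in y_k$ there is $s \in T \restriction t$ with $s(n) = i$; since $n$ is large, necessarily $s \supseteq t$, and I extend the current node to $s$. In the limit, for each $k$ both $\{n \in y_k : x(n) = 1\}$ and $\{n \in y_k : x(n) = 0\}$ are infinite, so $x \in [T] \subseteq A$ splits every member of $H$, witnessing that $A$ is $\omega$-splitting.

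For the hard direction, the organizing idea is that the collection $\mathcal I$ of sets $B \subseteq 2^\omega$ that are \emph{not} $\omega$-splitting forms a $\sigma$-ideal: it is clearly downward closed, and if $B = \bigcup_n B_n$ with each $B_n$ non-$\omega$-splitting, witnessed by countable $H_n$, then $H = \bigcup_n H_n$ witnesses that $B$ is non-$\omega$-splitting, since any $x \in B$ lies in some $B_n$ and hence fails to split some member of $H_n \subseteq H$. The key lemma I would then prove is: if $B \notin \mathcal I$ then for all but finitely many $n$ both $\{x \in B : x(n) = 0\} \notin \mathcal I$ and $\{x \in B : x(n) = 1\} \notin \mathcal I$. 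Assuming this fails, there is an infinite $N$ and a choice $i_n$ with $C_n = \{x \in B : x(n) = i_n\} \in \mathcal I$ for $n \in N$; then $B \setminus \bigcup_{n \in N} C_n$ is contained in the set of $x$ with $x(n) = 1 - i_n$ for all $n \in N$, which is non-$\omega$-splitting because every such $x$ induces the same trace on $N$, failing to split one of the two pieces into which the $i_n$ partition $N$. Thus $B \in \mathcal I$ by the $\sigma$-ideal property, a contradiction.

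With these two facts in hand, I would run a fusion. Since $A$ is analytic, fix a tree $S$ on $2 \times \omega$ with $A = p[S]$, and for $u \in S$ let $A_u$ denote the projection of the branches of $[S]$ passing through $u$; then $A_\emptyset = A \notin \mathcal I$, and each $A_u$ is the countable union of the $A_{u'}$ over immediate successors $u'$, so some successor stays out of $\mathcal I$. The construction builds a splitting tree $T \subseteq 2^{<\omega}$ together with a monotone labeling $t \mapsto u_t \in S$ such that the first coordinate of $u_t$ extends $t$, $A_{u_t} \notin \mathcal I$, and every real in $A_{u_t}$ extends $t$; taking unions of the auxiliary coordinate along each branch produces an $S$-witness, so that $[T] \subseteq A$. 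At each step, to drive $T$ toward being a splitting tree I pick a node $t$ and a large coordinate $n$, apply the key lemma to $A_{u_t}$ to see that both values at $n$ remain outside $\mathcal I$, and then use the $\sigma$-ideal property on the Souslin decomposition to descend to successors realizing $0$ and $1$ at coordinate $n$ while staying out of $\mathcal I$; a bookkeeping over all node–coordinate pairs guarantees the covering-of-a-cofinite-set condition at every node of $T$ in the limit. The main obstacle is precisely this balancing act: the covering requirement lives on coordinates and is supplied by the key lemma, while keeping branches inside the merely analytic (non-closed) set $A$ forces me to carry the Souslin witnesses and preserve membership outside $\mathcal I$ at every stage. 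Proving the key lemma and arranging the bookkeeping so that both demands are met simultaneously is the technical heart of the argument.
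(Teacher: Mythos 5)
The paper itself contains no proof of this statement---it is quoted directly from Spinas (Theorem 1.2 of \emph{Analytic countably splitting families})---so your attempt has to stand entirely on its own. Much of it does: the forward direction (a splitting tree's branches form an $\omega$-splitting family, by bookkeeping the requirements $(k,i)$ along a single branch) is correct and complete; so is the observation that the non-$\omega$-splitting sets form a $\sigma$-ideal $\mathcal{I}$, and so is your key lemma with its proof (the two sets into which the choices $i_n$ split $N$ witness that $\{x : \forall n \in N\, (x(n) = 1-i_n)\} \in \mathcal{I}$).

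The gap is the fusion itself, which your last sentence defers as ``the technical heart'' without supplying it---and the construction as you describe it does break. You need a labeling $t \mapsto u_t \in S$ with $A_{u_t} \notin \mathcal{I}$ that is \emph{monotone along every branch} of $T$; that monotonicity is exactly what makes $[T] \subseteq A$. But a covering requirement ``some $s \supseteq t$ in $T$ with $s(n) = i$'' can in general only be certified positive from $A_{u_t}$ itself: nodes $r \supsetneq t$ already in the tree either have length $> n$, so their value at $n$ is frozen (possibly equal to $1-i$), or carry labels whose key-lemma threshold may lie far beyond $n$---thresholds are not controllable when you descend in $S$, since positivity does not pass to subsets and a positive Souslin piece $A_{u'}$ can concentrate on reals that are constant on a long interval. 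So the witness for $(t,n,i)$ must be spawned from $u_t$, as a node $s''$ with label $u'' \supseteq u_t$; and nothing prevents $s''$ from passing through an existing node $r$ whose label is incompatible with $u''$, because the positive set $\{x \in A_{u_t} : x(n) = i\}$ can be concentrated over $[r]$ while meeting $A_{u_r}$ only in a set from $\mathcal{I}$ (a real of $A_{u_t}$ extending $r$ need not have any $S$-witness through $u_r$). At that point monotonicity of the labeling is destroyed. If you retreat to requiring coherent labels only along designated chains of labeled nodes, you hit the ``diagonal branch'' problem: a branch of $T$ can arise as a limit of infinitely many such sideways additions, passing through infinitely many labeled nodes whose labels are pairwise incompatible extensions of a single $u_{t_0}$, and then no infinite chain in $S$---hence no witness that the branch lies in $A$---can be extracted, since $S$ is infinitely branching in its second coordinate and no compactness argument is available. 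This is precisely why the theorem is easy for closed $A$ and a real theorem for analytic $A$; overcoming this obstacle is the substance of Spinas's proof, and it is the one thing your outline does not do.
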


This is a fundamental property of splitting trees. Essentially, it is showing that splitting forcing is an \emph{idealized forcing} in the sense of \cite{Zapletal2008} in which closed sets are dense\footnote{That $\mathbb{SP}$ is an idealized forcing in fact already follows from \cite[Prop. 2.1.6]{Zapletal2008}.}. Namely, let $\mathcal{I} \subseteq \mathcal{P}(2^\omega)$ be the $\sigma$-ideal consisting of sets that are not $\omega$-splitting. Then Theorem~\ref{thm:analyticsplitting} shows that $\mathbb{SP}$ is equivalent in the sense of forcing to the partial orders of $\mathcal{I}$-positive analytic, Borel or closed subsets of $2^\omega$. 

Later, in \cite{Spinas2007}, Spinas studied properties of splitting trees related to Borel subset-colorings. The main result\footnote{Spinas' result also follows immediately from Lemma~\ref{lem:mcg} below.} of \cite{Spinas2007}, is that $$ 2^{<\omega} \not\rightarrow_{\borel} [\mathbb{SP}]^2_{2^{\aleph_0}}.$$ Here, we adopt the well-known arrow notation from partition calculus. The above is then saying that for any Borel map $c \colon [2^\omega]^2 \to 2^\omega$, there is some splitting tree $T$ such that $c''[[T]]^2 \subsetneq 2^\omega$, i.e. we can avoid at least one color on the branches of a splitting tree\footnote{The outer pair of brackets in $[[T]]^2$ corresponds to the usual notation for the collection of two-sized subsets $[X]^2$ of a set $X$.}. Whenever $T \subseteq 2^{<\omega}$ is a tree, $\Gamma$ is some class of pair-colorings on $[T]$ (such as the continuous, Borel or Baire-measurable colorings) and $\mathbb{P}$ is a set of trees, we write $$T \rightarrow_\Gamma (\mathbb{P})^2_j$$ to say that for every coloring $c \colon [[T]]^2 \to j$ that is in $\Gamma$, there is $S \subseteq T$, $S \in \mathbb{P}$ such that $c$ is constant on $[[S]]^2$. For example, when $\mathbb{S}$ is \emph{Sacks forcing}, i.e. the collection of all perfect subtrees of $2^{<\omega}$, then $$T \rightarrow_{\baire} (\mathbb{S})^2_{j},$$ for $T$ an arbitrary perfect tree and $j \in \omega$, is known as Galvin's Theorem (see \cite[Theorem 19.6]{Kechris1995}). This is generally wrong for splitting trees as we shall see in Section~\ref{sec:Partition}. In fact there is a dense set of splitting trees $T \in \mathbb{SP}$ so that $$ T \not\rightarrow_{\cont} (\mathbb{SP})^2_2.$$ Combined with Theorem~\ref{thm:analyticsplitting}, this is showing that there are continuous pair-colorings on the branches of a splitting tree without a homogeneous $\omega$-splitting subfamily.

On the other hand $$2^{<\omega} \rightarrow_{\baire} (\mathbb{SP})^2_j $$ holds for every $j \in \omega$. 

In Section 4, we will study the model obtained by iterating $\mathbb{SP}$ in a countable support iteration of length $\omega_2$. We will show how to decide the value of the classical cardinal invariants in Cichoń's diagram (see \cite{BartoszynskiJudah1995} for a reference) and the combinatorial ones appearing in \cite{VanDouwen1984} or \cite{Blass2010}. 

\section{The Sacks property}

Let us recall the Sacks property and the weak Sacks property. 

\begin{definition}\label{def:sacks}
A forcing $\mathbb{P}$ has the \emph{Sacks property} if for every $g \colon \omega \to \omega \setminus 1$ such that $g(n) \longrightarrow \infty$, every condition $p \in \mathbb{P}$ and every name $\dot x$ for an element of $\omega^\omega$, there is $q \leq p$ and $H \in \prod_{n \in \omega} [\omega]^{g(n)}$, such that $$ q \Vdash \forall n \in \omega (\dot x(n) \in H(n)).$$
\end{definition}

\begin{definition}\label{def:weaksacks}
A forcing $\mathbb{P}$ has the \emph{weak Sacks property} if for every $g \colon \omega \to \omega$ such that $g(n) \longrightarrow \infty$, every condition $p \in \mathbb{P}$ and every name $\dot x$ for an element of $\omega^\omega$, there is $q \leq p$, $a \in [\omega]^\omega$ and $H \in \prod_{n \in a} [\omega]^{g(n)}$, such that $$ q \Vdash \forall n \in a (\dot x(n) \in H(n)).$$
\end{definition}

\begin{definition}
Let $T$ be a splitting tree. Then we say that $T$ is \emph{finitely covering} if there is a finite set $X \subseteq [T]$ that covers a cofinite subset of $\omega$. 
\end{definition}

\begin{lemma}\label{lem:main}
Let $T$ be not finitely covering and $l_0, m \in \omega$. Then there is $l_1 > l_0$ so that no $X \subseteq T \cap 2^{l_1}$ of size $m$ can cover $[l_0, l_1)$. 
\end{lemma}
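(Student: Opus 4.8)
The plan is to prove the contrapositive: suppose that for \emph{every} $l_1 > l_0$ there is a set $X_{l_1} \subseteq T \cap 2^{l_1}$ of size $m$ covering $[l_0,l_1)$, and show that $T$ must then be finitely covering. The natural tool is a compactness/K\H{o}nig's lemma argument. First I would fix $l_0, m$ and consider the sequence of witnesses $(X_{l_1} : l_1 > l_0)$. Each $X_{l_1}$ is an $m$-element subset of the finite set $2^{l_1}$; by passing to a subsequence of the $l_1$'s, I can stabilize, for each $j < m$, the ``trace'' of the $j$-th element of $X_{l_1}$ on any fixed finite level, using that there are only finitely many possibilities at each level. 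More carefully: enumerate each $X_{l_1}$ as $\{s^{l_1}_0, \dots, s^{l_1}_{m-1}\}$ and, by a diagonal/Ramsey-type thinning, obtain an infinite set $L \subseteq (l_0,\omega)$ such that for each $j < m$ the nodes $s^{l_1}_j$ for $l_1 \in L$ form an increasing chain under $\subsetneq$ (one thins $L$ so that $s^{l_1}_j \restriction k$ is constant for $l_1 \in L$ large, for each $k$; doing this simultaneously for the finitely many $j < m$ is fine). Let $x_j \in 2^\omega$ be the branch determined by this chain; since $T$ is a tree (closed under initial segments) and each $s^{l_1}_j \in T$, we get $x_j \in [T]$.

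Next I would check that the finite set $X = \{x_0,\dots,x_{m-1}\} \subseteq [T]$ covers a cofinite subset of $\omega$, in fact it covers $[l_0,\omega)$. Fix $n \geq l_0$ and $i \in 2$. Choose $l_1 \in L$ with $l_1 > n$ and $l_1$ large enough that all the chains have stabilized past level $n+1$, i.e. $x_j \restriction (n+1) = s^{l_1}_j \restriction (n+1)$ for all $j < m$. Since $X_{l_1}$ covers $[l_0,l_1) \ni n$, there is some $j < m$ with $s^{l_1}_j(n) = i$, hence $x_j(n) = i$. This shows $X$ covers $[l_0,\omega)$, so in particular $T$ is finitely covering, contradicting the hypothesis. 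Reversing the implication gives the lemma.

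The main obstacle — really the only point requiring care — is the simultaneous thinning step: I want a single infinite $L$ along which all $m$ of the sequences $(s^{l_1}_j)_{l_1}$ converge to branches. Since $m$ is finite this is routine (interleave or iterate the single-sequence argument $m$ times, or apply König's lemma to the finite-branching tree of ``partial configurations'' $\langle s_0 \restriction k, \dots, s_{m-1} \restriction k\rangle$ that occur for cofinitely many $l_1$), but one must be slightly attentive that the enumeration of $X_{l_1}$ is chosen coherently so that ``the $j$-th element'' makes sense across different $l_1$ — this is handled automatically once we pass to a subsequence on which the length-$k$ initial segments of the sorted enumeration stabilize. A secondary, purely bookkeeping point is that the hypothesis only gives covering of a cofinite set for \emph{finitely covering}, while the argument produces covering of the specific tail $[l_0,\omega)$; this is strictly stronger, so no issue arises. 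Note also that the hypothesis $l_1 > l_0$ and the conclusion's demand of size exactly $m$ (rather than $\leq m$) cause no trouble: if no $X$ of size $m$ can cover $[l_0,l_1)$ then trivially none of size $< m$ can either, and conversely a covering set can always be padded up to size $m$ provided $T \cap 2^{l_1}$ has at least $m$ elements, which holds for $l_1$ large since splitting trees are infinitely branching in a strong sense.
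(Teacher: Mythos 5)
Your proof is correct and uses essentially the same compactness/K\H{o}nig argument as the paper, which lifts the witnesses to $m$-tuples of branches, extracts a convergent subsequence in the compact space $[T]^m$, and derives a contradiction with ``not finitely covering.'' The only cosmetic difference is that you argue the contrapositive and show the limit tuple covers the whole tail $[l_0,\omega)$, whereas the paper locates a single uncovered level of the limit and pulls it back to one of the finite witnesses.
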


\begin{proof}
Suppose to the contrary that for every $l > l_0$ there is $(y_j^l)_{j<m} \in [T]^m$ so that $\{y_j^l : j < m\}$ covers $[l_0, l)$. By compactness, there is $a \in [\omega]^\omega$ so that $\langle (y_j^l)_{j<m} : l \in a \rangle$ converges to $(y_j)_{j < m} \in [T]^m$. Since $T$ is not finitely covering there is $l > l_0$ and $i \in 2$ so that for every $j < m$, $y_j(l)= i$. On the other hand, there is $l' \in a \setminus (l+1)$ so that $y_j^{l'} \restriction (l+1) \subseteq y_j$ for every $j < m$. Thus $y_j^{l'}(l) = i$ for every $j < m$ and $\{y_j^{l'} : j < m\}$ does not cover $[l_0, l')$.
\end{proof}

\begin{lemma}
\label{lem:nonfincov}
Let $T$ be not finitely covering and $g \colon \omega \to \omega$ such that $g(n) \longrightarrow \infty$. Then there is a continuous function $f \colon [T] \to \omega^\omega$ so that for any subtree $S \subseteq T$, $a \in [\omega]^\omega$ and $H \in \prod_{n \in a} [\omega]^{\leq g(n)} \times \omega^{\omega \setminus a}$, if $$f''[S] \subseteq \prod_{n \in \omega} H(n),$$ then $S$ is not a splitting tree. 
\end{lemma}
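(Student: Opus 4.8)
The goal is to build a single continuous $f\colon [T]\to\omega^\omega$ that "diagonalizes" against all small-width slaloms at once, using Lemma~\ref{lem:main} as the combinatorial engine. The idea is to encode into $f(y)$, for each branch $y\in[T]$, enough information about the finite initial segments of $y$ so that if the image $f''[S]$ is trapped inside a product $\prod_n H(n)$ with $|H(n)|\le g(n)$ on an infinite set $a$, then the branches of $S$ through a fixed level are covered by a set of bounded size, contradicting that $S$ is splitting.

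\medskip

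\textbf{Step 1: Set up a rapidly increasing sequence of levels.} Using Lemma~\ref{lem:main} repeatedly, together with $g(n)\to\infty$, I would recursively choose an increasing sequence $l_0 < l_1 < l_2 < \cdots$ and a matching sequence of "budgets" so that for each $k$, no subset of $T\cap 2^{l_{k+1}}$ of size $g(n_k)$ (for an appropriately chosen index $n_k$, to be fed the value at coordinate $n_k$) can cover the interval $[l_k, l_{k+1})$. Concretely: at stage $k$ I first pick $n_k$ large enough that $g(n_k)$ exceeds all previously committed sizes (this uses $g\to\infty$), then invoke Lemma~\ref{lem:main} with $l_0:=l_k$ and $m:=g(n_k)$ to obtain $l_{k+1}:=l_1$ with the stated non-covering property. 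The sequence $(n_k)_k$ should be chosen strictly increasing so the coordinates used are distinct; coordinates not of the form $n_k$ will be irrelevant (the $\omega^{\omega\setminus a}$ factor / the $\le g(n)$ slots we don't care about).

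\medskip

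\textbf{Step 2: Define $f$.} For $y\in[T]$, let $f(y)(n_k)$ code the finite sequence $y\restriction l_{k+1}$, say via a fixed bijection $2^{l_{k+1}}\to\omega$ (or more simply, code $y\restriction [l_k,l_{k+1})$, which is what actually matters); set $f(y)(n)=0$ for $n\notin\{n_k:k\in\omega\}$. This $f$ is visibly continuous: $f(y)(n_k)$ depends only on $y\restriction l_{k+1}$.

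\medskip

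\textbf{Step 3: Derive the contradiction.} Suppose $S\subseteq T$, $a\in[\omega]^\omega$, and $H\in\prod_{n\in a}[\omega]^{\le g(n)}\times\omega^{\omega\setminus a}$ with $f''[S]\subseteq\prod_n H(n)$. Since $a$ is infinite and the $n_k$ are all distinct, there are infinitely many $k$ with $n_k\in a$ — fix one such $k$. Then $\{f(y)(n_k):y\in[S]\}\subseteq H(n_k)$, a set of size $\le g(n_k)$. Decoding, the set $\{y\restriction l_{k+1}: y\in[S]\}$ has size $\le g(n_k)$, so pick witnesses $y_0,\dots,y_{g(n_k)-1}\in[S]$ whose restrictions to $l_{k+1}$ realize all these possibilities; then $X:=\{y_j\restriction l_{k+1}:j<g(n_k)\}\subseteq S\cap 2^{l_{k+1}}\subseteq T\cap 2^{l_{k+1}}$ has size $\le g(n_k)$. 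But $S$ splitting means $S\restriction s$ covers a cofinite set for every $s$, and a short argument shows $\{y\restriction l_{k+1}:y\in[S]\}$ — equivalently $S\cap 2^{l_{k+1}}$ with its branches — must cover every interval of naturals below $l_{k+1}$ that lies in the cofinite covering set; in particular, choosing $k$ also large enough that $[l_k,l_{k+1})$ is contained in the cofinite set witnessed at the root of $S$, the set $X$ covers $[l_k,l_{k+1})$. This contradicts the choice of $l_{k+1}$ from Lemma~\ref{lem:main} with $m=g(n_k)$.

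\medskip

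\textbf{Main obstacle.} The delicate point is the bookkeeping in Step 1: one must interleave the choice of the coordinates $n_k$ (where $g$ is large) with the level-doubling from Lemma~\ref{lem:main}, making sure that the size bound $g(n_k)$ used when invoking the lemma at stage $k$ is exactly the size bound that will constrain coordinate $n_k$ in Step 3, and that this can be arranged simultaneously for infinitely many $k$ despite not knowing $a$ in advance. A secondary subtlety is the passage "$S$ splitting $\Rightarrow$ branches of $S$ cover an initial interval": one should phrase this via $S\restriction\langle\rangle$ (or the stem) covering a cofinite set and note that being covered by the full branch set $[S]$ up to a level is the same as being covered by $S\cap 2^{l_{k+1}}$. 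Both are routine once the skeleton is fixed, but getting the quantifier order right in Step 1 is where care is needed.
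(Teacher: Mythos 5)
Your overall strategy is the paper's: use Lemma~\ref{lem:main} to pick levels $l_k$ so that no small subset of $T\cap 2^{l_{k+1}}$ covers $[l_k,l_{k+1})$, let $f$ code initial segments of branches at those levels, and conclude that a trapped $S$ fails to cover infinitely many intervals and hence is not splitting. However, there is a genuine gap in Step 3, caused by a design choice in Steps 1--2. You place the coding only on a sparse set of coordinates $\{n_k : k\in\omega\}$ and set $f(y)(n)=0$ for all other $n$, and then assert that ``since $a$ is infinite and the $n_k$ are all distinct, there are infinitely many $k$ with $n_k\in a$.'' This is false: $a$ is an \emph{arbitrary} infinite set, chosen by the adversary after $f$ is fixed, and it may meet $\{n_k : k\in\omega\}$ in a finite set (or not at all). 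Worse, your $f$ is then an actual counterexample to the conclusion: take $S=T$, take $a$ to be any infinite subset of $\omega\setminus\{n_k : k\in\omega\}$ (your $n_k$ are chosen sparsely, so this set is infinite), put $H(n)=\{0\}$ for $n\in a$, and choose $H(n)\in\omega$ large enough to absorb the (bounded, by continuity on the compact set $[T]$) values $f(y)(n)$ for $n\notin a$. Then $f''[T]\subseteq\prod_n H(n)$ although $T$ is splitting.

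The fix is exactly what the paper does: code at \emph{every} coordinate. That is, choose $l_{n+1}$ by Lemma~\ref{lem:main} so that no subset of $T\cap 2^{l_{n+1}}$ of size $g(n)$ covers $[l_n,l_{n+1})$, and let $f(y)(n)$ enumerate $y\restriction l_{n+1}$ (say, its lexicographic index in $T\cap 2^{l_{n+1}}$). Then for \emph{each} $n\in a$ one gets $\vert S\cap 2^{l_{n+1}}\vert\leq g(n)$, so $S$ fails to cover $[l_n,l_{n+1})$ for infinitely many $n$, which contradicts $S\restriction\emptyset$ covering a cofinite set. Note also that your bookkeeping requirement that ``$g(n_k)$ exceeds all previously committed sizes'' is unnecessary: Lemma~\ref{lem:main} is applied once per interval with whatever bound $m=g(n)$ is relevant to that coordinate, the intervals are disjoint, and no monotonicity of these bounds is ever used; in fact the divergence of $g$ plays no role in the construction at all. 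Your secondary remark about passing from ``$S$ is splitting'' to ``$S\cap 2^{l_{n+1}}$ covers $[l_n,l_{n+1})$ for large $n$'' is correct as stated (splitting trees have no terminal nodes, so every witnessing node extends to level $l_{n+1}$).
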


\begin{proof}
Using Lemma~\ref{lem:main}, let $\langle l_n : n \in \omega \rangle$ be a sequence so that for every $n \in \omega$, no set $X \subseteq T \cap 2^{l_{n+1}}$ of size $g(n)$ can cover $[l_{n}, l_{n+1})$. Now simply let $f(x)(n) = m$ iff $x \restriction l_{n+1}$ is the $m$'th element of $T \cap 2^{l_{n+1}}$ in lexicographical order. Then, whenever $S \leq T$ is such that at most $g(n)$ values can be attained as $f(x)(n)$ for $x \in [S]$, $ \vert S \cap 2^{l_{n+1}} \vert \leq g(n)$ and $S$ does not cover $[l_n,l_{n+1})$. If infinitely many intervals $[i_n, i_{n+1})$ are not covered then $S$ is not a splitting tree. 
\end{proof}

In order to get a failure of the weak Sacks property it is thus sufficient to find a splitting tree that is not finitely covering. Such a tree is not hard to construct directly, but the next lemma will produce such a tree below any given condition. 

\begin{lemma}\label{lem:mcg}
Let $T$ be a splitting tree, and $T \in M$ for $M$ a countable transitive model of a large enough fragment of ZFC. Then there is $S \leq T$ a splitting tree so that for any pairwise distinct $x_0, \dots, x_{n-1} \in [S]$, $$(x_0, \dots, x_{n-1}) \text{ is } T^n\text{-generic over } M.$$
\end{lemma}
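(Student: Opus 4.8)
The plan is a fusion argument over the countable model $M$, in the style of the classical construction of a (Sacks) tree all of whose tuples of distinct branches are mutually generic, but carried out with the fusion available for splitting trees.

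First I would fix the bookkeeping. Since $M$ is countable, for every $n$ there are only countably many dense open $D \subseteq T^n$ with $D \in M$; enumerate all pairs $(n_k,D_k)_{k\in\omega}$ with $D_k \in M$ dense open in $T^{n_k}$, each pair occurring infinitely often. I would also fold into this list the auxiliary dense subsets of the various $T^n$ that are needed to turn the set determined by a tuple of branches into a genuine filter rather than a mere upward-closed set: for $\bar p,\bar q \in T^n\cap M$ the set of $\bar r$ that extend $\bar p$ and either extend $\bar q$ or are incompatible with $\bar q$, together with the $\bar r$ incompatible with $\bar p$ (this is dense and $M$-coded); and for each $A\in M$ with $A \in \mathcal I$ the set of $\bar r \in T^n$ some coordinate $r_i$ of which satisfies $[r_i]\cap A = \emptyset$, which is dense because, given $\bar s$, the set $[s_0]\setminus A$ is an $\omega$-splitting Borel set, hence contains $[r_0]$ for some splitting tree $r_0 \le s_0$, by Theorem~\ref{thm:spinas}.

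The heart of the construction is a fusion sequence $T = T_0 \ge T_1 \ge \cdots$ of splitting trees, presented through finite stems $\sigma_k$ with fronts $A_k := \term(\sigma_k)$ and, for each $v \in A_k$, a committed splitting tree $\hat S_v \le T\restriction v$, so that $T_k = \sigma_k \cup \bigcup_{v\in A_k}\hat S_v$. The key invariant I would maintain is that every $\hat S_v$ is coded in $M$; consequently $T_k \in M$, and for any tuple $(v_0,\dots,v_{m-1})$ of distinct nodes of $A_k$ the tuple $(T_k\restriction v_0,\dots,T_k\restriction v_{m-1})$ really is a condition of $T^m$ in the sense of $M$. At stage $k$, assuming $|A_k|\ge n_k$, I would process $D_k$: running through the finitely many $n_k$-tuples of distinct elements of $A_k$ one after another, and for each such tuple replacing the currently committed trees below its nodes by an extension witnessing membership of the tuple in $D_k$ — this is where density of $D_k$ in $M$ is applied, and it is legitimate precisely because the committed trees are $M$-coded; since $D_k$ is open, the further shrinkings for later tuples do not spoil membership. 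Then I would choose the new front $A_{k+1}$ and the new (still $M$-coded) committed trees, taking care that the splitting property is being repaired along the way: each node's cofinite-covering requirement is attended to at some stage and protected from then on, so that $S := \bigcap_k T_k$ is again a splitting tree below $T$. Given then pairwise distinct $x_0,\dots,x_{n-1}\in[S]$, they are separated by some front $A_j$ (as $S$ is splitting, hence perfect), and since the task $(n,D)$ recurs infinitely often there is $k$ with $n_k = n$, $D_k = D$ and the $x_i$ passing through distinct nodes of $A_k$; at that stage the corresponding tuple was processed, so the commitments force, for the appropriate $\bar R \in D$, that $x_i \in [R_i]$ for every $i$. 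Thus $G_{\bar x} := \{\bar R \in T^n\cap M : \forall i\ x_i\in[R_i]\}$ meets $D$; the auxiliary tasks make $G_{\bar x}$ directed — using that incompatible conditions of $T^n$ differ in some coordinate by an $\mathcal I$-set and that each $x_i$ meets no $\mathcal I$-set of $M$ — hence a filter, so $(x_0,\dots,x_{n-1})$ is $T^n$-generic over $M$.

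The step I expect to be the main obstacle is the clause glossed in the previous paragraph: preserving the splitting property in the limit of the fusion. Unlike perfect trees, a splitting tree can fail to have nodes $t$ at arbitrarily high levels for which $T\restriction t$ already covers a tail of $\omega$, so after a shrinking one cannot simply re-establish a ``bounded threshold'' invariant; instead the fusion must be organized so that its $k$-th approximation commits to covering requirements for an increasing finite family of nodes, and one must check that the refinements demanded by the genericity tasks can always be made compatibly with these commitments. This is the technical core, and it is essentially the fusion already developed for this class of trees.
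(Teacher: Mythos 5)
There is a genuine gap, and it starts with the statement you are proving. In Lemma~\ref{lem:mcg} the poset $T^n$ is the $n$-th power of the \emph{countable} forcing whose conditions are the nodes of $T$ ordered by end-extension --- the paper says so immediately after the statement (``ordered by coordinate-wise extension, which is equivalent to Cohen forcing''). Your proposal instead treats conditions of $T^n$ as tuples of splitting subtrees of $T$: this is visible in your auxiliary dense sets (``$\bar r$ some coordinate of which satisfies $[r_i]\cap A=\emptyset$'' for $A\in\mathcal I$, justified via Theorem~\ref{thm:spinas}), in your insistence that the committed subtrees be $M$-coded so that tuples of them ``really are conditions of $T^m$ in the sense of $M$'', and in your worry that $G_{\bar x}$ might fail to be directed --- for the node forcing, $G_{\bar x}=\{\bar t: \forall i\, t_i\subseteq x_i\}$ is automatically a filter and none of that machinery is needed. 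So you are proving a different statement (mutual genericity for a finite product of splitting forcings), and that statement does not yield the lemma: the applications in the paper (Lemma~\ref{lem:mcgcover}, the canonization argument, Lemma~4.2) need the tuple to be generic for a forcing equivalent to Cohen forcing, whereas a finite product of $\omega^\omega$-bounding forcings adds no Cohen reals, so genericity for the splitting-forcing product cannot be converted into the node-genericity the lemma asserts.

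Even after correcting the target, the proof is not complete. The real difficulty --- which the paper flags explicitly --- is that to put a tuple $(v_0,\dots,v_{m-1})$ of front nodes into a dense subset of $T^m$ you must extend \emph{all} coordinates simultaneously within $T$, and each such extension discards part of the tree below each $v_i$; reconciling this with the covering promises that make the limit $S$ a splitting tree (which, unlike the perfect-tree case, cannot be reduced to a bounded-level splitting invariant) is exactly the combinatorial content of Proposition~4.16 of \cite{Schilhan2020}, which is what the paper cites in lieu of a proof. Your proposal names this step as ``the technical core'' and defers it to ``the fusion already developed for this class of trees'', i.e.\ to the very result being proved. The overall architecture you describe (fusion over the countable model, fronts, each dense task recurring infinitely often, openness absorbing later shrinkings) is the right shape and matches the spirit of the cited argument, but as it stands the attempt both aims at the wrong forcing and leaves the essential lemma-specific step unproved.
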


Here, we view $T^n$ as the forcing ordered by coordinate-wise extension, which is equivalent to Cohen forcing. 

\begin{proof}
This is Proposition~4.16 in \cite{Schilhan2020} when applied to $k=1$, in combination with Lemma~4.10 (also see Definition~4.6 and 4.9).
\end{proof}

We do not want to reprove Lemma~\ref{lem:mcg} here since it would result in essentially copying the exact argument from \cite{Schilhan2020}, which can be read independently from the rest of the paper. The following is a simple genericity argument.

\begin{lemma}\label{lem:mcgcover}
Let $S$ be as in Lemma~\ref{lem:mcg} for arbitrary $T$ and $M$. Then $S$ is not finitely covering. 
\end{lemma}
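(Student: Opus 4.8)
The plan is to argue by contradiction using the genericity of tuples from $[S]$. Suppose $S$ were finitely covering, witnessed by a finite set $X = \{x_0, \dots, x_{n-1}\} \subseteq [S]$ and some $k \in \omega$ such that $X$ covers $\omega \setminus k$. We may assume the $x_i$ are pairwise distinct (discarding duplicates only shrinks $X$, and the covering property is monotone in $X$). By Lemma~\ref{lem:mcg}, the tuple $(x_0, \dots, x_{n-1})$ is $T^n$-generic over $M$. The statement ``$\{x_0, \dots, x_{n-1}\}$ covers $\omega \setminus k$'' is of the form that, for each $m \geq k$ and each $i \in 2$, some coordinate $x_j$ satisfies $x_j(m) = i$; this is a $\Pi^0_2$ condition on the tuple (read off from the generic branches), so by genericity it is forced by some condition $(t_0, \dots, t_{n-1}) \in T^n$ that lies in the generic filter, i.e. with $t_j \subseteq x_j$ for each $j$. (Here I use that $T \in M$, so $T^n \in M$ and the relevant dense sets are in $M$.)

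The key step is then a density argument inside $M$ showing no such condition can force the covering statement, because $T$ itself is not finitely covering — but wait, $T$ \emph{is} just an arbitrary splitting tree, which could well be finitely covering. So instead the argument must use that below any condition in $T^n$ one can steer the $n$ branches, using the splitting property of $T$, to all agree on some value at some level beyond $k$. Concretely: given $(t_0, \dots, t_{n-1}) \in T^n$, pick $i \in 2$; since each $T \restriction t_j$ covers a cofinite set, for each $j$ and each large enough $m$ there is an extension of $t_j$ in $T$ taking value $1-i$ at $m$. We want a single $m \geq k$ and a single $i$ and extensions $s_j \supseteq t_j$ in $T$ with $s_j(m) = i$ for \emph{all} $j$ simultaneously — then $(s_0, \dots, s_{n-1})$ forces that no coordinate takes value $1-i$ at $m$... but that is not quite right either, since later the branch could still be anything; genericity only pins down finite information.

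The correct move: the statement ``$X$ covers $\omega \setminus k$'' fails exactly when there is $m \geq k$ and $i \in 2$ with $x_j(m) = i$ for all $j < n$; this is a $\Sigma^0_1$ (open) event on the tuple, and its complement — the covering statement — is closed, hence the set of conditions forcing the covering statement to fail is open dense in $T^n$ provided we can show: for every $(t_0, \dots, t_{n-1}) \in T^n$ there are $m \geq \max_j |t_j|$, $i \in 2$, and extensions $s_j \supseteq t_j$ in $T$ of a common length with $s_j(m) = i$ for all $j < n$. This last fact is where I expect the real work, and it is exactly the content that will have to be extracted: one chooses $i \in 2$, uses that $T \restriction t_0$ covers a cofinite set to find $s_0 \supseteq t_0$ with $s_0(m) = i$ for suitable large $m$, then uses the covering property of $T \restriction t_1$ at that \emph{same} $m$ to get $s_1$, and so on — since ``$T \restriction t$ covers a cofinite subset of $\omega$'' lets us realize value $i$ at cofinitely many coordinates, we can process the finitely many coordinates $j = 0, \dots, n-1$ one at a time at a common coordinate $m$ large enough to work for all of them, extending all the $s_j$ to a common length afterwards. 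Thus the dense set $D \in M$ of such tuples exists, the generic tuple meets it, contradicting that $X$ covers $\omega \setminus k$. This completes the proof that $S$ is not finitely covering.
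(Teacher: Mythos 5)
Your final argument is correct and is exactly the ``simple genericity argument'' the paper alludes to (the paper leaves this lemma without an explicit proof): the set of tuples $(t_0,\dots,t_{n-1}) \in T^n$ admitting a common level $m \geq \max(k, \max_j\vert t_j\vert)$ and $i \in 2$ with all coordinates extendable to take value $i$ at $m$ is dense (intersect the finitely many cofinite sets witnessing that each $T\restriction t_j$ covers a cofinite set) and lies in $M$, so the generic tuple meets it and cannot cover $\omega\setminus k$. The false starts in the middle of your write-up are ultimately discarded, and the version you settle on is complete and matches the intended proof.
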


Finally we get: 

\begin{thm}
Splitting forcing does not have the weak Sacks property below any condition. 
\end{thm}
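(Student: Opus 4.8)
The plan is to assemble the final theorem directly from the three preparatory lemmas, reducing everything to producing, below an arbitrary condition, a splitting tree that is not finitely covering, and then invoking Lemma~\ref{lem:nonfincov} to manufacture a name witnessing the failure of the weak Sacks property.

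\medskip

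First I would fix an arbitrary condition $T \in \mathbb{SP}$ and a function $g \colon \omega \to \omega$ with $g(n) \to \infty$, as in Definition~\ref{def:weaksacks}. To get a tree below $T$ that is not finitely covering, take any countable transitive model $M$ of a large enough fragment of ZFC with $T \in M$ (such an $M$ exists by a standard L\"owenheim--Skolem plus Mostowski collapse argument, or by working with $\in$-models if one prefers). Apply Lemma~\ref{lem:mcg} to obtain $S \leq T$ such that every tuple of pairwise distinct branches of $S$ is $T^n$-generic over $M$; then Lemma~\ref{lem:mcgcover} gives that $S$ is not finitely covering. Now apply Lemma~\ref{lem:nonfincov} to the tree $S$ (which is not finitely covering) and the function $g$, yielding a continuous $f \colon [S] \to \omega^\omega$ with the stated property. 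Let $\dot x$ be the $\mathbb{SP}$-name below $S$ for $f(\dot x_{\mathrm{gen}})$; since $\mathbb{SP}$ has continuous reading of names in the sense described in the introduction, this is a legitimate name for an element of $\omega^\omega$, and for any $S' \leq S$ we have $S' \Vdash \dot x = f(\dot x_{\mathrm{gen}})$ with the generic real lying in $[S']$.

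\medskip

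The core of the argument is then the contrapositive packaging of Lemma~\ref{lem:nonfincov}. Suppose, toward a contradiction, that the weak Sacks property holds below $S$ for this $g$ and $\dot x$: there is $S' \leq S$, a set $a \in [\omega]^\omega$, and $H \in \prod_{n \in a}[\omega]^{g(n)}$ with $S' \Vdash \forall n \in a\,(\dot x(n) \in H(n))$. Extend $H$ to a function $\tilde H$ defined on all of $\omega$ by setting $\tilde H(n) = \omega$ for $n \notin a$, so $\tilde H \in \prod_{n \in a}[\omega]^{\le g(n)} \times \omega^{\omega \setminus a}$. For every branch $x \in [S']$, the pair $(x)$ is $T$-generic (hence Cohen-generic) over $M$ by the choice of $S$, so the generic real can be taken to be $x$ and the forcing statement $S' \Vdash \forall n \in a\,(\dot x(n) \in H(n))$ translates into $f(x)(n) \in H(n)$ for all $n \in a$; for $n \notin a$ we trivially have $f(x)(n) \in \omega = \tilde H(n)$. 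Hence $f''[S'] \subseteq \prod_{n \in \omega} \tilde H(n)$. But $S'$ is a splitting tree (being a condition in $\mathbb{SP}$), contradicting the conclusion of Lemma~\ref{lem:nonfincov}. Therefore no such $S'$, $a$, $H$ exist, i.e. the weak Sacks property fails below $S$, and since $T$ was arbitrary it fails below every condition.

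\medskip

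I expect the only genuinely delicate point to be making precise the passage from the forcing statement ``$S' \Vdash \forall n \in a\,(\dot x(n) \in H(n))$'' to the pointwise statement ``$f(x)(n) \in H(n)$ for every $x \in [S']$ and $n \in a$''. This is where the genericity from Lemma~\ref{lem:mcg} is used: because every single branch $x \in [S'] \subseteq [S]$ is generic over $M$, and $\dot x$ is interpreted continuously as $f(\dot x_{\mathrm{gen}})$, the value $f(x)$ is exactly the interpretation of $\dot x$ by the generic filter determined by $x$; absoluteness of the membership relation $f(x)(n) \in H(n)$ (a $\Delta_0$ statement about reals coded in the extension) then transfers the forced statement down to the point $x$. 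One should be slightly careful that $a$ and $H$ themselves need not lie in $M$ — but this is not a problem, since we are not claiming genericity relative to a model containing $a$ and $H$; rather, $f(x)$ is determined by $x$ alone via the ground-model-coded continuous function $f$ (indeed $f$ was built from $S$ and $g$, which one may also take in $M$ if convenient, though it is not needed), and once $f(x)$ is pinned down as a real, the inclusion $f''[S'] \subseteq \prod_n \tilde H(n)$ is a purely combinatorial statement about that real and about $\tilde H$, verifiable in $V$. Everything else is bookkeeping.
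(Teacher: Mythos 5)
Your proof is correct and takes essentially the same route as the paper, which obtains the theorem exactly by combining Lemma~\ref{lem:mcg} and Lemma~\ref{lem:mcgcover} to get a non--finitely-covering $S\leq T$ and then feeding $S$ into Lemma~\ref{lem:nonfincov}. One minor remark: the transfer from $S' \Vdash \forall n \in a\,(\dot x(n) \in H(n))$ to $f(x)(n) \in H(n)$ for all $x \in [S']$ requires no genericity over $M$ at all --- it is the standard density argument for a continuously read name (if some $x \in [S']$ had $f(x)(n) \notin H(n)$, a long enough node $s \subseteq x$ decides $f(\cdot)(n)$, and $S' \restriction s$ would force the negation), so your appeal to Lemma~\ref{lem:mcg} at that point (which concerns genericity for the poset $T^n$, not for $\mathbb{SP}$) is a harmless but unnecessary detour.
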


At first sight, Lemma~\ref{lem:mcg} seems to be overkill to find a subtree that is not finitely covering. But we are not aware of a simpler proof that is not essentially the same combinatorial argument used in the proof of Lemma~\ref{lem:mcg}, which revolves around having to deal with arbitrary tuples of nodes, rather than just extending single nodes along the construction of a splitting subtree. 

In the next theorem we characterize splitting trees on which a construction such as in Lemma~\ref{lem:nonfincov} is possible and we shall see that finite-covering is an essential idea. Moreover, we will see that it does not matter whether we consider Baire-measurable functions or just continuous ones. For a splitting tree $T$ and a class $\Gamma$ of functions $f \colon [T] \to \omega^\omega$ we will write $\wsacks_\Gamma(T)$ to say that the definition of the weak Sacks property holds applied to the condition $p = T$ and a name $\dot x$ for $f(\dot x_{\operatorname{gen}})$ where $f \in \Gamma$ and $\dot x_{\operatorname{gen}}$ is a name for the generic real. 

\begin{thm}
\label{thm:wsacks}
Let $T$ be a splitting tree. Then the following are equivalent. 
\begin{enumerate}
    \item $\wsacks_{\baire}(T)$
    \item $\wsacks_{\cont}(T)$
    \item $\exists S \leq T \forall s \in S(S\restriction s \text{ is finitely covering})$
\end{enumerate}
\end{thm}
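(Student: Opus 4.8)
The implications $(1)\Rightarrow(2)$ is trivial since continuous functions are Baire-measurable. The real content is $(2)\Rightarrow(3)$ and $(3)\Rightarrow(1)$. I would organize the argument around these two, with the genericity ideas of Lemma~\ref{lem:mcg} doing the heavy lifting in the first.

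For $(2)\Rightarrow(3)$, I argue by contraposition: assume no $S\leq T$ has the property that $S\restriction s$ is finitely covering for all $s\in S$, and produce a continuous $f$ witnessing $\neg\wsacks_{\cont}(T)$. The key point is that the failure of (3) is a property that can be \emph{localized and iterated}: by the assumption, below any condition there is a further condition that is not finitely covering (indeed, if $T'\leq T$ were such that $T'\restriction s$ is finitely covering for every $s\in T'$, that would be (3); otherwise some $s$ gives $T'\restriction s$ not finitely covering). Now I want to build a single $S^*\leq T$ and a fusion-style argument so that every node of $S^*$ lives in a piece that is not finitely covering, and then apply the construction of Lemma~\ref{lem:nonfincov} along a fusion sequence. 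Concretely: enumerate a fusion construction where at stage $n$ I pass, below each of the finitely many ``marked'' nodes of the current tree, to a subtree that is not finitely covering, and use Lemma~\ref{lem:main} to pick levels $l_n$ so that at every marked node no set of size $g(n)$ covers the relevant interval. The resulting $f$, defined as in Lemma~\ref{lem:nonfincov} but relative to the branching pattern of $S^*$, has the property that for any $S\leq S^*\leq T$, any $a\in[\omega]^\omega$ and any $H\in\prod_{n\in a}[\omega]^{g(n)}$, if $f''[S]$ is captured by $H$ on $a$ then $S$ fails to cover infinitely many intervals below some node, hence $S$ is not a splitting tree. This exactly says $S^*\Vdash\neg\exists a\,\exists H\ (\dots)$, i.e.\ $\neg\wsacks_{\cont}(T)$.

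For $(3)\Rightarrow(1)$, fix $S\leq T$ with $S\restriction s$ finitely covering for every $s\in S$, a Baire-measurable $f\colon[T]\to\omega^\omega$, and $g(n)\to\infty$; I must find $S'\leq S$, $a\in[\omega]^\omega$ and $H\in\prod_{n\in a}[\omega]^{g(n)}$ with $f''[S']\subseteq\prod_{n\in a}H(n)\times\omega^{\omega\setminus a}$. First, since Baire-measurable functions on $[S]$ are continuous on a comeager $G_\delta$, and $\mathbb{SP}$ is an idealized forcing in which closed (equivalently, splitting-tree) sets are dense by Theorem~\ref{thm:analyticsplitting} together with the fact that $\mathcal I$-positive Borel sets reduce to closed ones, I may pass to $S'\leq S$ on which $f$ is genuinely continuous; here I must check that $S'$ can be taken so that $S'\restriction s$ is still finitely covering for all $s\in S'$ — this should follow because finite-covering passes to the relevant subtrees obtained in the reduction (a finitely covering tree, restricted to any node, remains finitely covering, and the reduction only shrinks within nodes). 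Now with $f$ continuous and $S'$ everywhere finitely covering, I use the finite covers: for each node $s$ at level $l_n$ of a suitably chosen level sequence, fix a finite $X_s\subseteq[S'\restriction s]$ covering a cofinite set; the union over the (finitely many) level-$l_n$ nodes is a finite set $Y_n\subseteq[S']$ whose size can be bounded, and — crucially — by continuity of $f$ and a fusion that thins $S'$ so that the level-$l_{n+1}$ nodes all agree with some branch of $Y_n$ up to $l_{n+1}$, every branch of the fused tree agrees with a member of $Y_n$ at coordinate $n$ in the sense that $f(x)(n)$ takes only $|Y_n|$ many values. Choosing the fusion so that $|Y_n|\leq g(n)$ for all but finitely many $n$ (possible since $g(n)\to\infty$, absorbing the extra coordinates into $\omega^{\omega\setminus a}$), we set $a=\{n: |Y_n|\leq g(n)\}$ and $H(n)=\{f(x)(n): x\in Y_n\}$; then $S'$ (fused) forces $f(\dot x_{\mathrm{gen}})(n)\in H(n)$ for all $n\in a$, giving $\wsacks_{\baire}(T)$.

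The main obstacle is the fusion in $(3)\Rightarrow(1)$: I need a notion of fusion for splitting trees under which the property ``$S'\restriction s$ is finitely covering for every $s$'' is preserved through $\omega$ steps, and under which I can simultaneously (a) thin below each node on the current level to agree with a fixed finite cover at the next chosen level and (b) keep enough branching so the limit is still a splitting tree. Splitting trees are less rigid than Sacks trees, so the bookkeeping — which nodes are ``kept'', how the finite covers shift as we go deeper, and why the diagonal limit remains a splitting tree rather than merely a perfect tree — is where the argument must be carried out carefully; I expect this is exactly the combinatorial core, and Lemma~\ref{lem:main} (applied inside each finitely-covering piece to control cover sizes versus $g$) is the tool that makes it go through.
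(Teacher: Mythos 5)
Your overall architecture (trivial $(1)\Rightarrow(2)$, contraposition for $(2)\Rightarrow(3)$, a covering fusion for $(3)\Rightarrow(1)$) matches the paper, but your $(2)\Rightarrow(3)$ has a genuine gap. To refute $\wsacks_{\cont}(T)$ you must produce a single continuous $f\colon [T]\to\omega^\omega$ such that \emph{no} splitting $q\leq T$ captures $f''[q]$; your plan builds one fusion tree $S^*\leq T$ and an $f$ that defeats only the subtrees of $S^*$, and the concluding step ``$S^*\Vdash\neg\exists a\,\exists H(\dots)$, i.e.\ $\neg\wsacks_{\cont}(T)$'' is a non sequitur: it yields only $\neg\wsacks_{\cont}(S^*)$, since a splitting $q\leq T$ not concentrating on $S^*$ is left completely uncontrolled. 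Under $\neg(3)$ the tree $T$ may contain many nodes $s$ with $T\restriction s$ finitely covering, and splitting subtrees living there must also be defeated. The paper handles this globally: it iterates the pruning derivative $S\mapsto S'=\{s\in S: S\restriction s\text{ finitely covering}\}$ transfinitely, shows the sequence reaches $\emptyset$ at a countable stage, thereby decomposes $[T]$ into countably many pieces each contained in some non-finitely-covering $[T_\beta\restriction s]$, applies Lemma~\ref{lem:nonfincov} on each piece, and glues the resulting functions into one continuous $f$ on $[T]$ via an auxiliary gluing lemma; any splitting $S\leq T$ is then $\omega$-splitting, hence (the non-$\omega$-splitting sets forming a $\sigma$-ideal) concentrates on one piece, where Theorem~\ref{thm:spinas} produces a splitting subtree that the corresponding $f_{\beta,s}$ defeats. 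Some such global decomposition of $[T]$ is unavoidable; a single fusion below $T$ cannot see all splitting subtrees of $T$.

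In $(3)\Rightarrow(1)$ your idea (fuse using the finite covers, keeping the number of nodes at selected levels below $g$) is the right one, but the preliminary step where you pass to $S'\leq S$ on which $f$ is continuous and then assert that $S'\restriction s$ is still finitely covering for all $s\in S'$ is not justified: the parenthetical claim that a finitely covering tree restricted to any node remains finitely covering is false in general (the finite cover need not pass through that node), and, more to the point, shrinking $S\restriction s$ to $S'\restriction s$ can destroy finite covering because the covering branches need not survive into $S'$. The paper avoids this by never passing to a continuity subtree: it fixes, for each $s$ in the original tree, both a finite cover $x^s_0,\dots,x^s_{k-1}\in[T\restriction s]$ and a point $z^s\in[T\restriction s]\cap X$ of the comeager set of continuity, and runs a single fusion whose terminal nodes are long initial segments of these fixed branches, simultaneously entering the sets $O_n$, deciding $f(x)(i_n)$, bounding the width by $g(i_n)$, and covering $[m(s),m(\sigma(s)))$ via a designated working node $\sigma(s)\subseteq z^s$. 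You should merge your two fusions in the same way; as written, the ``combinatorial core'' you defer is exactly where your two-stage decomposition breaks.
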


\begin{proof}

$(3)\rightarrow(1)$: Let $g \in \omega^\omega$ be so that $g(n)$ diverges to $\infty$ and let $f \colon [T] \to \omega^\omega$ be Baire-measurable. Then there are open dense sets $O_n \subseteq [T]$, for $n\in \omega$, so that $f$ is continuous on $X := \bigcap_{n \in \omega} O_n$. By $(3)$ we can assume wlog that $T$ is such that for every $s \in T$, $T\restriction s$ is finitely covering. For every $s \in T$, fix $x^s_0, \dots, x^s_{k-1}$ in $[T \restriction s]$ and $z^s \in [T \restriction s] \cap X$, all pairwise distinct, $k(s) = k \in \omega$ and $m(s)$ so that $\{x^s_0, \dots, x^s_{k-1}\}$ covers $[m(s), \omega)$. We are going to construct a sequence $\langle T_n : n \in \omega \rangle$ of finite subtrees of $T$ that is an strictly increasing by end-extension, such that $S := \bigcup_{n \in \omega} T_n$ is a splitting tree witnessing the weak Sacks property for $f$ and $g$. Along the construction, there will be a map $\sigma$ that maps every $s \in \term(T_n)$ to a dedicated ``working node" $\sigma(s) \in \term(T_{n+1})$ and there will be a strictly increasing sequence $\langle i_n : n \in \omega \rangle$ of naturals. The following properties will be satisfied for every $n \in \omega$: 

\begin{enumerate}[label=(\alph*)]
    \item For every $s \in \term(T_{n+1})$, $[T \restriction s] \subseteq O_n$,
    \item $s$ determines $f(x)(i_n)$ for every $x \in [T\restriction s] \cap X$ and $\vert \term(T_{n+1}) \vert < g(i_n)$. 
    \item For every $s \in \term(T_n)$, $T_{n+1} \restriction s$ covers $[m(s), m(\sigma(s))$.  
\end{enumerate}

(a) implies that $[S] \subseteq X$ and (b) then yields that $S$ witnesses $\wsacks(T)$ for $f$ and $g$. (c) on the other hand implies that for every $s \in \term(T_n)$, $S \restriction n$ covers $[m(s), \omega)$ and thus that $S$ is a splitting tree. 

The construction is as follows. We start with $T_0 = \{ \emptyset \}$ and $i_0 = 0$. Next, given $T_n$, we let $K = \sum_{s \in \term(T_n)} (1+ k(s))$. Since the values of $g$ diverge to infinity, there is $i_n > i_{n-1}$ so that $g(i_n) > K$. For each $s \in \term(T_n)$, let $\sigma(s) := z^s \restriction l$ for $l$ large enough such that $[T \restriction (z^s \restriction l) ] \subseteq O_n$, $z^s \restriction l$ determines the value of $f(x)(i_n)$ on $X$ and $z^s\restriction l$ is not an initial segment of any of the $x^s_i$, for $i<k(s)$. Let $m = \max \{ m(\sigma(s)) : s \in \term(T_n)\}$ and for every $s \in \term(T_n)$ and $i < k(s)$, find $t^s_i$ extending $x^s_i \restriction m$ such that $[T \restriction t^s_i] \subseteq O_n$ and $t^s_i$ decides the value $f(x)(i_n)$ on $X$. Finally let $T_{n+1}$ be the downwards closure of $\bigcup_{s \in \term(T_n)} \{t^s_0, \dots, t^s_{k(s)-1} \} \cup  \{ \sigma(s) : s \in \term(T_n) \}$.

$(2) \rightarrow (3)$: Assume that $(3)$ fails. For a splitting tree $S$ consider a pruning derivative $S' := \{s \in S : S \restriction s \text{ is finitely covering} \}$. Now define $T_0 = T$, $T_{\alpha +1} = T_\alpha'$ and $T_\gamma = \bigcap_{\beta < \gamma} T_\beta $ for every $\alpha < \omega_1$ and $\gamma < \omega_1$ a limit ordinal. Since we assume that $(3)$ fails, we have that for every $\alpha < \omega_1$, if $T_{\alpha} \neq \emptyset$, then $T_{\alpha} \setminus T_{\alpha +1} \neq \emptyset$. In particular there is some $\alpha < \omega_1$ so that $T_\alpha = \emptyset$.

\begin{lemma}
Let $\langle X_n : n \in \omega \rangle$ be a partition of $2^\omega$ into closed sets and $f_n \colon X_n \to \omega^\omega$ be continuous, for every $n \in \omega$. Then there is a continuous function $f \colon 2^\omega \to \omega^\omega$ so that for every $n \in \omega$, $f(x)(m) = f_n(x)(m)$ for all $m \geq n$ and $x \in X_n$. 
\end{lemma}

\begin{proof}
Let $X_n = [R_n]$ for a perfect tree $R_n$, for every $n \in \omega$. We define a sequence $\langle \varphi_n : n \in \omega \rangle$, where $\varphi_n \colon 2^{\leq i_n} \to \omega^{\leq n}$ is order preserving for every $n$ and $\langle i_n : n \in \omega \rangle$ is increasing, inductively as follows. Start with $\varphi_0 = \{ (\emptyset, \emptyset) \}$ and $i_0 = 0$. Given $\varphi_n$ and $i_n$, let $i_{n+1} > i_n$ be large enough so that $R_{i} \cap R_{j} \cap 2^{i_{n+1}} = \emptyset$  and for every $x \in X_i$, $f_i(x)(n)$ is decided by $x \restriction i_{n+1}$, for every $i,j < n$. Let $\varphi_{n+1} \supseteq \varphi_n$, $\varphi_{n+1} \colon 2^{\leq i_{n+1}} \to \omega^{\leq n}$ be arbitrary so that $\varphi_{n+1}(s) \subseteq f_i(x)$ for $s \in R_i \cap 2^{i_{n+1}}$, $x \in X_i$ and $i < n$. Finally, let $\varphi = \bigcup_{n \in \omega} \varphi_n$ and $$f(x) := \bigcup_{n \in \omega} \varphi(x \restriction n).$$ We see that $f$ is as required. 
\end{proof}

For every $\beta < \alpha$, let $A_\beta$ be the set of minimal $s \in T_{\beta} \setminus T_{\beta+1}$. For any $s \in A_\beta$, applying Lemma~\ref{lem:nonfincov} we find a continuous function $f_{\beta,s} \colon [T_\beta \restriction s] \to \omega^\omega$ so that for any splitting tree $S \leq T_\beta \restriction s$, for almost all $n \in \omega$, more than $2^n$ values can be obtained as $f_{\beta,s}(x)(n)$ for $x \in [S]$. Apply the previous Lemma to find a continuous function $f \colon [T] \to \omega^\omega$ so that $f(x) =^* f_{\beta,s}(x)$ uniformly for all $x \in [T_\beta \restriction s]$. We  claim that $f$ is as required. Namely, suppose that there is $S \leq T$ so that for infinitely many $n \in \omega$, $\vert \{ f(x)(n) : x \in [S] \} \vert < 2^n$. Then there is $\beta < \alpha$ and $s \in \omega$ so that $[S] \cap [T_{\beta} \restriction s]$ is $\omega$-splitting and in particular contains the branches of a splitting tree. This is a contradiction. 

$(1) \rightarrow (2)$ is clear. 
\end{proof}

It would be interesting to find a similar characterization for the Sacks property. 

\section{Partition results}
\label{sec:Partition}

\begin{thm}
\label{thm:wsacksandcolor}
Let $T$ be a splitting tree and assume that $T \rightarrow_{\cont} (\mathbb{SP})^2_{2}$. Then $\wsacks_{\cont}(T)$ is satisfied. 
\end{thm}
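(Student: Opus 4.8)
The plan is to prove the contrapositive. By Theorem~\ref{thm:wsacks}, $\wsacks_\cont(T)$ is equivalent to its clause $(3)$, so it is enough to show: if there is no $S \le T$ all of whose node-restrictions are finitely covering, then $T \not\rightarrow_\cont (\mathbb{SP})^2_2$, i.e.\ there is a continuous $c\colon [[T]]^2 \to 2$ with no $c$-homogeneous splitting subtree. Assume then that $(3)$ fails. I would first record two elementary observations. (i) If $s \subseteq s'$ are nodes of a subtree $S$ and $S\restriction s$ is not finitely covering, then neither is $S\restriction s'$, since $[S\restriction s'] \subseteq [S\restriction s]$; hence once $S\restriction s$ is not finitely covering, every node-restriction of $S\restriction s$ is not finitely covering. (ii) Consequently the failure of $(3)$ gives that for every splitting $S \le T$ there is $s \in S$ with $S\restriction s$ splitting and having all node-restrictions not finitely covering; in particular the set $D = \{t \in T : T\restriction t \text{ is not finitely covering}\}$ is upward closed and dense in $T$. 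Since $c$ only needs to exclude homogeneous splitting subtrees, and any such subtree $S$ can be shrunk via (ii) to $S\restriction s$, which is still $c$-homogeneous, it suffices to arrange that no splitting subtree with all node-restrictions not finitely covering is $c$-homogeneous.

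Now for the coloring. Fix the maximal antichain $A$ of minimal elements of $D$. For each $a \in A$ apply Lemma~\ref{lem:nonfincov} to the (not finitely covering) tree $T\restriction a$ with, say, $g(n) = n+2$; this yields levels $\langle l^a_n : n \in \omega\rangle$ and a continuous $f_a\colon [T\restriction a] \to \omega^\omega$ with $f_a(x)(n)$ equal to the lexicographic index of $x\restriction l^a_{n+1}$ among the nodes of $T\restriction a$ at that level, and with the property (contrapositive of the conclusion of Lemma~\ref{lem:nonfincov}) that for every splitting $S \le T\restriction a$ one has $\lvert S \cap 2^{l^a_{n+1}}\rvert > n+2$ for all but finitely many $n$. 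On $\bigcup_{a \in A}[T\restriction a]$ these glue to a partial continuous function. I would then define $c$ by a ``parity along the common $f$-stem'' recipe: for distinct $x, y$ lying over a common $a \in A$, put $c(\{x,y\}) = \bigoplus_{n < m(x,y)} \chi^a_n\bigl(f_a(x)(n)\bigr)$, where $m(x,y)$ is the least $n$ with $f_a(x)(n) \ne f_a(y)(n)$ (so the value depends only on the common $f_a$-stem and $c$ is symmetric) and the $\chi^a_n\colon \omega \to 2$ are auxiliary ``bit'' functions still to be chosen; pairs not lying over a common element of $A$ receive a colour determined by the relevant stem. This $c$ is continuous. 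The key computation is that if $S \le T\restriction a$ is splitting and $c\restriction [[S]]^2$ is constant, then every branching node of the $f_a$-image tree of $S$ has the same value of $\bigoplus_n \chi^a_n$ along its stem; by choosing the $\chi^a_n$ suitably — using that the $f_a$-codes literally record lexicographic positions, so that ``only $g(n)$ codes occur at level $n$'' is a genuine thinness condition (e.g.\ making $\{k : \chi^a_n(k)=0\}$ an initial segment of length $g(n)$) — this constraint should force $\lvert S\cap 2^{l^a_{n+1}}\rvert \le n+2$ for infinitely many $n$, contradicting the width bound above. Homogeneous subtrees whose branches never enter $D$ are dealt with by running the same construction inside $T \setminus D$, whose splitting subtrees also fail $(3)$.

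The step I expect to be the main obstacle is exactly this last design point. A naive parity colouring (e.g.\ $c(\{x,y\}) = m(x,y) \bmod 2$, or any ``single-level'' $2$-colouring) only forces the admissible branching to halve, or to vanish at half the levels, which is far too weak against a width lower bound like $n+2$; what is needed is a choice of the $\chi^a_n$ for which a single $2$-colouring accumulates constraints along the stem and pins the branching of any homogeneous $S$ to at most $g(n)$ nodes at infinitely many of the levels $l^a_{n+1}$. A secondary subtlety, which forces the somewhat awkward antichain bookkeeping, is that one cannot simply replace $T$ by $T\restriction s^*$ for a single not-finitely-covering node $s^*$ and pad $c$ by a constant off $[T\restriction s^*]$: a splitting subtree of $T$ disjoint from $[T\restriction s^*]$ would then be trivially $c$-homogeneous, so $c$ really must be defined coherently over all of $T$. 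The remaining points — the reduction via Theorem~\ref{thm:wsacks}, continuity of $c$, and observations (i)--(ii) — are routine.
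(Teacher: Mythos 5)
Your overall strategy (pass to the contrapositive, reduce $\neg\wsacks_{\cont}(T)$ to the failure of clause $(3)$ of Theorem~\ref{thm:wsacks}, and then build a continuous pair-coloring with no homogeneous splitting subtree) is legitimate in principle, but the step you yourself flag as the main obstacle is a genuine gap, and the parity design you sketch cannot close it. If $c(\{x,y\})=\bigoplus_{n<m(x,y)}\chi^a_n(f_a(x)(n))$ is constant on $[[S]]^2$ with value $\epsilon$, all you learn is that every branching node $\sigma$ of the $f_a$-image tree of $S$ satisfies $\bigoplus_{n<|\sigma|}\chi^a_n(\sigma(n))=\epsilon$. This is a constraint on the $\chi$-parity accumulated along the path between consecutive branching nodes; it says nothing about how many immediate successors a branching node may have. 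In particular a branching node of length $m$ may have arbitrarily many successors $k$ all lying in a single $\chi^a_m$-class (e.g.\ all lexicographic indices $\geq g(m)$, of which cofinitely many are available), and then the parity constraint propagates identically to every one of them and can be corrected by non-branching segments before the next split. So homogeneity does not force $\lvert S\cap 2^{l^a_{n+1}}\rvert\leq n+2$ at infinitely many levels, and no choice of the $\chi^a_n$ (each being a single bit per value) can turn a path-parity condition into a width bound against the lower bound supplied by Lemma~\ref{lem:nonfincov}. The secondary machinery (the antichain $A$, the gluing of the $f_a$, the treatment of branches avoiding $D$) inherits the same unresolved core and, as you note, $T\setminus D$ need not even be a tree you can recurse into without redoing the derivative analysis from the proof of Theorem~\ref{thm:wsacks}.

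The paper's proof shows that no width bound is needed, which is exactly why it succeeds where your design stalls: one does not need a coloring with \emph{no} homogeneous splitting subtree, only one for which \emph{every} homogeneous splitting subtree is itself a witness to the weak Sacks property for the given $f$ and $g$. Concretely, choose $i_0<i_1<\cdots$ so that for each $n$ there is $m>i_n$ with $g(m)>2^{i_n}$ and $f(x)(m)$ decided by $x\restriction i_{n+1}$, and color a pair by the parity of the index $k$ with $\Delta(x,y)\in[i_k,i_{k+1})$. Homogeneity forbids branching of $S$ on infinitely many whole intervals $[i_k,i_{k+1})$, whence the trivial estimate $\lvert S\cap 2^{i_{k+1}}\rvert\leq 2^{i_k}<g(m)$ for the associated $m$ bounds the number of values of $f(x)(m)$ on $[S]$. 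This argument is direct, needs neither Theorem~\ref{thm:wsacks} nor Lemma~\ref{lem:nonfincov}, and its contrapositive already yields the single ``bad'' coloring you were trying to construct (apply it to an $f,g$ witnessing $\neg\wsacks_{\cont}(T)$). You should rework the proof along these lines or supply a genuinely different mechanism for the width bound; as written, the key computation does not go through.
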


\begin{proof}
Let $f \colon [T] \to \omega^\omega$ be continuous and $g \colon \omega \to \omega$ diverge to $\infty$. Define an increasing sequence $\langle i_n : n \in \omega \rangle$ recursively such that for every $n \in \omega$, there is $m > i_n$ with $g(m) > 2^{i_n}$ and $f(x)(m)$ is decided by $x \restriction i_{n+1}$ for every $x \in [T]$. Define a coloring $c \colon [T]^2 \to 2$ such that $$c(x,y) = \begin{cases} 0 &\text{ if } \exists k \in \omega (\Delta(x,y) \in [i_{2k}, i_{2k+1})) \\
1 &\text{ if } \exists k \in \omega (\Delta(x,y) \in [i_{2k+1}, i_{2k+2}))
\end{cases},$$ where $\Delta(x,y) = \min \{l \in \omega : x(l)\neq y(l) \}$. Now let $S \leq T$ be such that $[S]$ is $c$-homogeneous, say with constant color $0$. Then we see that for every $k \in \omega$, $\vert S \cap 2^{i_{2k+2}} \vert \leq 2^{i_{2k+1}}$. Thus there is $m > i_{2k+1}$ such that at most $2^{i_{2k+1}} < g(m)$ many values can be attained as $f(x)(m)$ for $x \in [S]$. Similarly for color $1$. \end{proof}

From Theorem~\ref{thm:wsacksandcolor} and the results of the last section we immediately find that: 

\begin{thm}
\label{thm:nohomo}
Let $T$ be a splitting tree. Then there is a splitting tree $S \leq T$ so that $$S\not\rightarrow_{\cont} (\mathbb{SP})^2_{2}. $$
\end{thm}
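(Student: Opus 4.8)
The plan is to derive Theorem~\ref{thm:nohomo} as a straightforward corollary of the machinery already assembled. The key observation is that Theorem~\ref{thm:wsacksandcolor} is precisely the contrapositive tool we need: if every splitting tree $S \leq T$ satisfied $S \rightarrow_{\cont} (\mathbb{SP})^2_2$, then in particular $\wsacks_{\cont}(S)$ would hold for every such $S$. But by Theorem~\ref{thm:wsacks}, $\wsacks_{\cont}(T)$ is equivalent to the existence of some $S \leq T$ with $S \restriction s$ finitely covering for every $s \in S$. So the only thing to rule out is that $T$ has such a ``hereditarily finitely covering'' subtree, and this is exactly what Lemmas~\ref{lem:mcg} and \ref{lem:mcgcover} prevent.

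Concretely, I would argue as follows. Fix an arbitrary splitting tree $T$. Pick a countable transitive model $M$ of a large enough fragment of ZFC with $T \in M$, and let $S \leq T$ be the splitting tree provided by Lemma~\ref{lem:mcg}. By Lemma~\ref{lem:mcgcover}, $S$ is not finitely covering. I now claim $S \not\rightarrow_{\cont} (\mathbb{SP})^2_2$. Suppose otherwise. Then by Theorem~\ref{thm:wsacksandcolor}, $\wsacks_{\cont}(S)$ holds. Applying the implication $(2) \Rightarrow (3)$ of Theorem~\ref{thm:wsacks} to $S$ in place of $T$, there is $S' \leq S$ such that $S' \restriction s$ is finitely covering for every $s \in S'$. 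In particular, taking $s = \emptyset$ (or any stem node), $S'$ itself is finitely covering, hence $S' \leq S$ is a finitely covering splitting tree. But one needs to reconcile this with $S$ not being finitely covering: a finitely covering subtree $S'$ of $S$ would be witnessed by a finite $X \subseteq [S'] \subseteq [S]$ covering a cofinite set, contradicting that $S$ is not finitely covering. Hence no such $S'$ exists, the assumption $S \rightarrow_{\cont} (\mathbb{SP})^2_2$ is false, and $S$ is the desired subtree below $T$.

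There is one subtlety to nail down: Theorem~\ref{thm:wsacks}(3) only directly hands back a subtree $S'$ with $S' \restriction s$ finitely covering for all $s$, and I should check that this does force $S'$ (equivalently $S$) to be finitely covering in the ambient sense. Since any branch of $S'$ is a branch of $S$, and covering a cofinite set is witnessed by finitely many branches, $[S'] \subseteq [S]$ gives the contradiction immediately; alternatively one can observe that condition (3) applied to $S$ fails outright because it fails for every subtree of $S$ — indeed if $S''\leq S$ and $X \subseteq [S'']$ is finite then $X \subseteq [S]$ so $X$ cannot cover a cofinite set. So in fact the cleanest phrasing is: $S$ not finitely covering implies no $S'' \leq S$ is finitely covering, so (3) of Theorem~\ref{thm:wsacks} fails for $S$, so $\wsacks_{\cont}(S)$ fails, so by Theorem~\ref{thm:wsacksandcolor} (contrapositive) $S \not\rightarrow_{\cont}(\mathbb{SP})^2_2$.

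I do not expect any genuine obstacle here — the theorem is explicitly flagged in the excerpt as following ``immediately'' from Theorem~\ref{thm:wsacksandcolor} and the previous section. The only place to be careful is the direction of the implications (using the contrapositive of Theorem~\ref{thm:wsacksandcolor}, not the theorem itself) and the trivial but necessary remark that failure of finite-covering for $S$ propagates downward to all subtrees, so that clause (3) of Theorem~\ref{thm:wsacks} genuinely fails for $S$. Everything else is bookkeeping.
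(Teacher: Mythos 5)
Your proposal is correct and follows essentially the route the paper intends: obtain a mutually generic splitting subtree $S \leq T$ via Lemmas~\ref{lem:mcg} and~\ref{lem:mcgcover}, observe that failure of finite covering passes to all subtrees so that $\wsacks_{\cont}(S)$ fails, and conclude by the contrapositive of Theorem~\ref{thm:wsacksandcolor}. The only cosmetic difference is that you route the middle step through the implication $(2)\Rightarrow(3)$ of Theorem~\ref{thm:wsacks}, whereas one can equally well apply Lemma~\ref{lem:nonfincov} to $S$ directly; both are among the ``results of the last section'' the paper appeals to.
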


Our motivation behind proving this theorem were the results of \cite{Schrittesser2016} which use Galvin's Theorem for Sacks forcing and an analogue for its countable support iterations (see \cite[Theorem 3.17]{Schrittesser2016}) in a crucial way to get certain canonization results for binary relations. These results were vastly generalized in \cite{Schilhan2020} using a technique based on Lemma~\ref{lem:mcg}, to be applied, among other things, to splitting forcing. The above theorem shows that the method of \cite{Schrittesser2016} does not apply in a similar way to splitting forcing. In its simplest form, the canonization result says that for any anayltic equivalence relation $E$ on $2^\omega$ and any perfect tree $T$, there is a perfect subtree $S \subseteq T$ so that $E$ is canonical on $[S]$, i.e. either $[S]$ consists of pairwise $E$-inequivalent reals or $[S]$ is contained in an $E$-equivalence-class. This is imediate from Galvin's Theorem. For splitting trees on the other hand, we apply Lemma~\ref{lem:mcg} to $T$ and a model $M$ containing a code of $E$ to get $S \leq T$ consisting of mutual $T$-generics over $M$. If $x,y \in [S]$ are $E$-equivalent, then there is $s \subseteq x$ forcing over $M[y]$ that $x E y$. In particular, for every $x \in [S \restriction s]$, $x E y$ and $S \restriction s$ is contained in an $E$-class.

\begin{thm}
\label{thm:trivialcolor}
Let $c \colon [2^\omega]^2 \to j$ be Baire-measurable, $j \in \omega$. Then there is a splitting tree $T$, so that $[T]$ is $c$-homogeneous. 
\end{thm}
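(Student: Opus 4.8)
The plan is to build the splitting tree $T$ with $[T]$ $c$-homogeneous by a fusion-style recursion, exploiting that Baire-measurable colorings are continuous on a comeager set and that, on the branches of a splitting tree, we retain enough freedom to choose both extensions of nodes and which color to commit to. First I would fix a countable base $\{N_k : k \in \omega\}$ of basic clopen subsets of $2^\omega$ and, since $c$ is Baire-measurable, fix open dense sets $O_n \subseteq [2^{<\omega}] = 2^\omega \times 2^\omega$ (away from the diagonal) so that $c$ is continuous on the comeager set $Y := \bigcap_n O_n \subseteq (2^\omega)^2$. Because the color of a pair $(x,y)$ with $x \restriction l \perp y \restriction l$ for some fixed $l$ is, on $Y$, determined by longer initial segments of $x$ and $y$, we can think of $c$ (restricted to $Y$) as being read off finite data.

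Next I would run a recursion constructing a strictly increasing (by end-extension) sequence $\langle T_n : n \in \omega \rangle$ of finite subtrees of $2^{<\omega}$, together with levels $l_0 < l_1 < \dots$, so that $T_n \subseteq 2^{\leq l_n}$ and $\term(T_n) = T_n \cap 2^{l_n}$; the final tree is $T := \bigcup_n T_n$. At stage $n$, for every terminal node $s \in \term(T_n)$ I want to guarantee the splitting requirement "$T \restriction s$ covers a cofinite set" by, over the course of infinitely many later stages, forcing $T \restriction s$ to cover a longer and longer initial interval of $\omega$. The key point is that when I extend a node $t$ to split at some coordinate $n$ (i.e. add two successors $t^\frown i$ for $i \in 2$ that later grow into full branches), I simultaneously have the freedom to extend $t$ far enough that the relevant pairs land in $O_n$ and that $c$ reads the value $0$ (say — I decide once and for all to aim for constant color $0$) on every pair coming from the two branches I am separating. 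Concretely: given the current finite tree, list the "new" pairs of terminal nodes $(s, s')$ with $s \perp s'$ created so far; for each, extend $s$ and $s'$ compatibly to $\hat s, \hat s'$ so that the clopen box $[T \restriction \hat s] \times [T \restriction \hat s']$ is contained in $O_n$ and $c$ is constantly $0$ on it restricted to $Y$; this is possible because $O_n$ is open dense and $c$ is continuous on $Y$, so some box below the current one works. The splitting demands and the homogeneity demands are interleaved in the standard bookkeeping fashion, and because each individual requirement only asks to further extend finitely many nodes by a finite amount, it can always be met.

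The main obstacle — and the step I expect to require the most care — is reconciling the $c$-homogeneity demand with the splitting demand. When I split a node $t$ at coordinate $m$ into $t^\frown 0$ and $t^\frown 1$, I create pairs of branches differing already at $m$, and I must be able to push both sides out so that all such pairs get color $0$; but a branch $z$ passing through $t^\frown 0$ will later be split again, generating new pairs $(z, z')$ with $z, z'$ both through $t^\frown 0$ and differing much higher up — and those too must get color $0$. The resolution is that the recursion never "closes off" a pair: each pair of distinct branches $x \neq y$ of the final tree has a well-defined splitting level $\Delta(x,y) = l$, this $l$ equals $l_n$ for some $n$, and at the stage where that splitting was performed I arranged $[T \restriction (x \restriction l')] \times [T \restriction (y \restriction l'')] \subseteq O_n \cap c^{-1}(0)$ for suitable $l', l'' > l$ — so continuity of $c$ on $Y$ forces $c(x,y) = 0$ for all $x, y \in [T]$ (note $[T] \times [T] \setminus \Delta \subseteq Y$ follows from demand (a)-type clauses that $[T \restriction s] \subseteq O_n$ eventually for all terminal $s$). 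One must check the bookkeeping handles the fact that the "list of pairs to fix" grows at every stage, but since at stage $n$ we only need to handle pairs of terminal nodes of $T_n$ (a finite set), and any pair of eventual branches is separated at some finite stage and handled then, a routine enumeration argument closes the gap. I would also remark that this recovers Spinas' observation $2^{<\omega} \rightarrow_{\baire} (\mathbb{SP})^2_j$ for all $j \in \omega$ by the same argument, committing to color $0$ among the $j$ colors throughout.
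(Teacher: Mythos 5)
Your overall architecture (continuity of $c$ on a comeager set $Y = \bigcap_n O_n$, a fusion of finite trees interleaving splitting demands with homogeneity demands, and fixing the color of each pair of branches at the finite stage where they are first separated) matches the paper's proof. But there is a genuine gap at the single most important point: you ``decide once and for all to aim for constant color $0$'' and justify each step by saying that, since $c$ is continuous on $Y$ and $O_n$ is open dense, ``some box below the current one works.'' Continuity only guarantees that every box $[s]\times[s']$ contains a sub-box on which $c \restriction Y$ is \emph{constant}; it does not let you prescribe the value. Already for the constant coloring $c \equiv 1$ your construction cannot get off the ground, and in general the achievable color can vary from box to box, so there is no reason a single pre-chosen color can be realized at every one of the infinitely many separations your fusion performs.

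The missing ingredient is a pigeonhole step that the paper performs before the construction starts: fix a color $i \in j$ and a node $s \in 2^{<\omega}$ such that below \emph{every} $t \supseteq s$ there exist incompatible $t_0, t_1 \supseteq t$ with $c$ constantly $i$ on $([t_0]\times[t_1])\cap Y$. (Such $i$ and $s$ exist: otherwise one descends through the $j$ colors to reach a node below which no box meets $Y$ in a monochromatic set of any color, contradicting the continuity of $c$ on the comeager set $Y$.) One then works below $s$ and always splits so as to realize this one densely realizable color $i$. A second, related point: the paper's bookkeeping only ever creates one new incompatible pair per step (a single ``working node'' is split into $t_0, t_1$ colored $i$, while all other terminal nodes are extended uniquely by constant tails, which also serves the covering requirement), so the color of every other pair is inherited from an earlier stage by monotonicity of boxes; your scheme of separately ``fixing'' a growing list of already-incompatible pairs would additionally need an argument that one color works for all of them simultaneously, which again comes back to the same density property.
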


\begin{proof}
In the following, we will identify $c$ with the corresponding symmetric function on $(2^\omega)^2$. Since $c$ is Baire-measurable, there is a decreasing sequence $(O_n)_{n \in \omega}$ of dense open subsets of $(2^\omega)^2$ such that $c$ is continuous on $X = \bigcap_{n\in \omega} O_n$. Fix $i \in j$ and $s \in 2^{<\omega}$ so that for any $t \in 2^{<\omega}$, $s \subseteq t$, there are incompatible $t_0,t_1$ extending $t$ such that $c$ is constant with color $i$ on $([t_0] \times [t_1]) \cap X$. Without loss of generality we may assume that $s = \emptyset$. We will construct an increasing sequence $(T_n)_{n \in \omega}$ of finite subtrees of $2^{<\omega}$ and an increasing sequence $(m_n)_{n \in \omega}$ of natural numbers with the following properties for every $n \in \omega$.

\begin{enumerate}
    \item All terminal nodes of $T_n$ have length $m_n$,
    \item for any terminal nodes $t_0 \neq t_1$ in $T_n$, $c$ has constant value $i$ on $([t_0] \times [t_1]) \cap X$ and $[t_0] \times [t_1] \subseteq O_n$, 
    \item every $t \in T_n$ has at least $4$ pairwise incompatible extensions in $T_{n+1}$,
    \item for every $t \in T_n$, $\{ t' \in T_{n+2} : t \subseteq t' \}$ covers $[m_{n+1}, m_{n+2})$. 
\end{enumerate}

Provided we have constructed such a sequence, we shall check that $T = \bigcup_{n \in \omega} T_n$ is as required. It is clear from (2) that $[T]$ is $c$-homogeneous with color $i$. To show that $T$ is a splitting tree, let $t \in T$ be arbitrary, say without loss of generality that $t$ is a terminal node of $T_n$ for some $n \in \omega$. Moreover let $l \in [m_{k},m_{k+1})$ for some $k \geq n+1$. Then we may extend $t$ to $t^+$ in $T_{k-1}$ and by (4) there are $t_0, t_1$ extending $t^+$ in $T$ so that $t_0(l) = 0$ and $t_1(l) = 1$. Thus we have shown that $T_t$ covers $[m_{n+1}, \omega)$.  

Now let us construct the sequence recursively. We start with $T_0$ and $m_0 \in \omega$ so that (1) and (2) are satisfied. To find such $T_0$ and $m_0$, let $t_0 \perp t_1$ be arbitrary such that $c$ has constant color $i$ on $([t_0]\times [t_1]) \cap X$ . Next extend $t_0$ to $t_0^0$ and $t_1^0$, such that $t_0^0 \perp t_1^0$ and $c$ has constant color $i$ on $([t^0_0]\times [t^0_1]) \cap X$ and similarly find $t_0^1, t_1^1$. Finally, extend $t^0_0,t^0_1,t^1_0$ and $t^1_1$ to nodes of the same length $m_0$ such that the open subsets of $(2^\omega)^2$ determined by any pair of these nodes are contained in $O_0$. Finally let $T_0$ be the tree generated by these nodes. Given $T_n$ and $m_n$, we proceed as follows. We construct an increasing sequence of finite trees $(R_k)_{k < K}$ of some unspecified finite length $K$, starting with $R_0 = T_n$. For every $k < K$, all terminal nodes of $R_k$ will have the same length. In each step $k < K$, there is a working node $t \in \term(R_k)$, that has exactly two incompatible extensions $t_0, t_1$ which are terminal nodes in $R_{k+1}$ such that $c$ has color $i$ on $([t_0]\times [t_1]) \cap X$. All other terminal nodes $t'$ of $R_k$ are extended uniquely to terminal nodes of the form $t'^\frown 0^\frown \dots^\frown 0$ or $t'^\frown 1^\frown \dots^\frown 1$ in $R_{k+1}$. This is done in a way that for every $r \in T_{n-1}$, there is one such $t'$ extending $r$ which is extended by $0$'s and another one that is extended by $1$'s. This is possible since every $r \in T_{n-1}$ is extended by at least $3$ (in fact $4$) pairwise incompatible nodes in $T_n$, so at most $1$ of them is the working node $t$. In case $n=0$, this step is irrelevant. By varying the working nodes we can easily ensure in finitely many steps $k < K \in \omega$ that all terminal nodes of $T_n$ are extended by at least $4$ pairwise incompatible nodes in $R_{K-1}$. Next we construct an increasing sequence $(S_l)_{l<L}$ of unspecified finite length $L$, starting with $S_0 = R_{K-1}$. Again all terminal nodes of $S_l$ have the same length for each $l<S$. This time, in every step $l$ there is a working pair $(u,v) \in \term(S_l)^2$. In $\term(S_{l+1})$, $u$ and $v$ are extended uniquely to $u'$ and $v'$ respectively such that $[u'] \times [v'] \subseteq O_{n+1}$. All other terminal nodes of $S_l$ are extended by $0$'s only or by $1$'s only in the same way as before, now using that at least $4$ pairwise incompatible nodes extend each $r \in T_{n-1}$. After finitely many steps, we can ensure that every $(u,v) \in \term(S_0)$ is extended by $u'$ and $v'$ respectively such that $[u'] \times [v'] \subseteq O_{n+1}$. Finally, we let $T_{n+1} = S_{L-1}$. 
\end{proof}

\section{The splitting model}

\begin{thm}
Let $\mathbb{P}$ be the $\omega_2$-length countable support iteration of $\mathbb{SP}$. Then, in $V^\mathbb{P}$, $\aleph_1 = \operatorname{cof}(\mathcal{M}) = \mathfrak{a} < \mathfrak{r} = \operatorname{non}(\mathcal{N}) = \mathfrak{c} = \aleph_2$.
\end{thm}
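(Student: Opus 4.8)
The plan is to establish the chain of (in)equalities by splitting it into two halves: the ``small'' cardinals on the left, $\cof(\mathcal M) = \mathfrak a = \aleph_1$, and the ``large'' cardinals on the right, $\mathfrak r = \non(\mathcal N) = \mathfrak c = \aleph_2$. Since $\mathbb P$ is a countable support iteration of a proper forcing of length $\omega_2$ over a model of $\mathrm{CH}$, standard iteration theory gives $\mathfrak c = \aleph_2$ in $V^{\mathbb P}$, and a $\Delta$-system / reflection argument shows $\mathbb P$ has the $\aleph_2$-c.c., so all cardinals are preserved.

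For the left-hand side, the key point is that $\mathbb{SP}$ is $\omega^\omega$-bounding (Shelah) and this is preserved by countable support iteration; moreover $\mathbb{SP}$ has the Sacks property --- this is noted in Remark~\ref{rem:zapsacks} via Zapletal's result, and the Sacks property is also preserved under countable support iteration of proper forcings. The Sacks property implies $\cof(\mathcal N) = \aleph_1$ is preserved, hence $\cof(\mathcal M) \leq \cof(\mathcal N) = \aleph_1$, so $\cof(\mathcal M) = \aleph_1$; this also pins down all the Cichoń invariants below $\cof(\mathcal N)$ at $\aleph_1$. For $\mathfrak a = \aleph_1$ I would show that $\mathbb{SP}$ (and its countable support iteration) preserves a ground-model maximal almost disjoint family as maximal. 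Here the natural route is to verify that $\mathbb{SP}$ preserves ``$\mathcal A$ is a m.a.d.\ family'' using a fusion argument --- one shows that for a fixed ground-model m.a.d.\ family $\mathcal A$ and a name $\dot B$ for an infinite subset of $\omega$, below any condition one can find a stronger condition forcing $\dot B$ to have infinite intersection with some member of $\mathcal A$, by diagonalizing against the covering property of splitting trees. Then the preservation theorem for such ``$\mathcal A$-preserving'' properties along countable support iterations (as in Brendle or Bartoszyński--Judah) yields a m.a.d.\ family of size $\aleph_1$ in $V^{\mathbb P}$.

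For the right-hand side, $\mathfrak c = \aleph_2$ is already established, so it suffices to show $\mathfrak r \geq \aleph_2$ and $\non(\mathcal N) \geq \aleph_2$ (the reverse inequalities being trivial), together with $\mathfrak r \leq \non(\mathcal N)$ which is a ZFC theorem. Since $\mathbb{SP}$ adds a splitting real over $V$, and the iteration has length $\omega_2$ with countable supports, a genericity/reflection argument shows that no set of reals of size $\aleph_1$ appearing at some intermediate stage $V^{\mathbb P_\alpha}$, $\alpha < \omega_2$, can be a reaping family in $V^{\mathbb P}$: the splitting real added at stage $\alpha$ splits every ground-model real, and any $\aleph_1$-sized family reflects into some such $V^{\mathbb P_\alpha}$ by the $\aleph_2$-c.c. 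Hence $\mathfrak r = \aleph_2$, and then $\non(\mathcal N) \geq \mathfrak r = \aleph_2$, so $\non(\mathcal N) = \aleph_2$ as well.

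The main obstacle, I expect, is the preservation of maximality of the almost disjoint family along the countable support iteration: unlike the other items, which follow from well-documented preservation theorems (boundedness, the Sacks property) or from the basic fact that a splitting real is added, the statement $\mathfrak a = \aleph_1$ requires both the single-step preservation lemma for $\mathbb{SP}$ --- which genuinely uses the combinatorics of splitting trees and the covering property, likely via a fusion argument interleaved with the almost-disjointness constraints --- and the verification that this preservation property is of the right iterable form. One must be careful that the relevant preservation property is ``$\sqsubseteq$-preserving'' in the sense required for the countable support iteration theorem, and that limit stages (both cofinality $\omega$ and cofinality $\omega_1$) are handled; cofinality $\omega_1$ limits are where almost-disjoint family preservation arguments are most delicate, typically requiring that the family be ``somewhere'' captured at a club of intermediate stages.
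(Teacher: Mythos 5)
Your proposal contains two errors that are fatal as written, both on points where the paper's actual argument goes a different way. First, you assert that $\mathbb{SP}$ has the Sacks property, citing Remark~\ref{rem:zapsacks}; that remark states the opposite. Zapletal's result is that some condition in $\mathbb{SP}$ forces the ground model reals to be \emph{null}, whence $\mathbb{SP}$ \emph{fails} the Sacks property below that condition (and the main theorem of Section~2 is that it fails even the weak Sacks property below every condition). Your intended consequence $\operatorname{cof}(\mathcal{N}) = \aleph_1$ cannot hold in any case: the theorem you are proving asserts $\operatorname{non}(\mathcal{N}) = \aleph_2$, and $\operatorname{non}(\mathcal{N}) \leq \operatorname{cof}(\mathcal{N})$ in ZFC, so the route to $\operatorname{cof}(\mathcal{M}) = \aleph_1$ via $\operatorname{cof}(\mathcal{N})$ is closed. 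The paper instead proves $\operatorname{non}(\mathcal{M}) = \aleph_1$ directly, using the lemma (from \cite{Schilhan2020}) that every real of the extension lies in $M[\dot c]$ for a countable $M \preccurlyeq H(\theta)$ and a $\dot c$ forced to be Cohen over $M$; since Cohen forcing keeps the ground model reals non-meager, $V \cap 2^\omega$ stays non-meager, and then $\operatorname{cof}(\mathcal{M}) = \max(\mathfrak{d}, \operatorname{non}(\mathcal{M})) = \aleph_1$, with $\mathfrak{d} = \aleph_1$ coming from $\omega^\omega$-bounding as you say. Second, ``$\mathfrak{r} \leq \operatorname{non}(\mathcal{N})$ is a ZFC theorem'' is false: in the random real model $\mathfrak{r} \geq \operatorname{cov}(\mathcal{N}) = \aleph_2$ while $\operatorname{non}(\mathcal{N}) = \aleph_1$. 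So your derivation of $\operatorname{non}(\mathcal{N}) = \aleph_2$ has no support; the paper gets it from the very Zapletal result you inverted, iterated cofinally in $\omega_2$.

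On $\mathfrak{a} = \aleph_1$, you correctly flag this as the delicate point, but the fusion-plus-iteration-preservation scheme you sketch is never carried out, and it is not what the paper does: the paper again invokes the Cohen-capturing lemma, taking a Cohen-indestructible mad family $\mathcal{A}$ (which exists under CH), placing $\mathcal{A}$ in $M$, capturing the name $\dot y$ for a purported diagonalizing set inside $M[\dot c]$, and concluding that some $x \in \mathcal{A} \cap M$ meets $\dot y$ infinitely. This single lemma handles both $\operatorname{non}(\mathcal{M})$ and $\mathfrak{a}$ and sidesteps all the preservation-theorem machinery you worry about at limit stages. The parts of your proposal that do survive are $\mathfrak{c} = \aleph_2$, the $\aleph_2$-c.c.\ reflection argument giving $\mathfrak{r} = \aleph_2$ from the cofinally added splitting reals, and $\mathfrak{d} = \aleph_1$ from bounding; these agree with the paper.
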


\begin{lemma}
Let $\mathbb{P}$ be a countable support iteration of $\mathbb{SP}$, $p \in \mathbb{P}$ and $\dot y$ a $\mathbb{P}$-name for a real. Moreover let $A \in V$ be arbitrary. Then there is a countable elementary submodel $M \preccurlyeq H(\theta)$, for large enough $\theta$, with $A \in M$, $q \leq p$ and a name $\dot c$, so that $$q \Vdash`` \dot c \text{ is a Cohen real over } M \text{ and } \dot y \in M[\dot c]".$$
\end{lemma}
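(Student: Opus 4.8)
The plan is to prove the statement for an arbitrary proper forcing $\mathbb{P}$; the only facts about splitting forcing that enter are Shelah's theorem that $\mathbb{SP}$ is proper — hence the countable support iteration $\mathbb{P}$ is proper, by preservation of properness under countable support iterations — together with the trivial observation that $\mathbb{SP}$, and therefore $\mathbb{P}$, is atomless. In particular nothing like Lemma~\ref{lem:mcg} is needed here. First I would fix a countable $M\preccurlyeq H(\theta)$, for $\theta$ sufficiently large, with $\{\mathbb{P},p,\dot y,A\}\in M$, let $\pi\colon M\to\bar M$ be the transitive collapse, and write $\bar{\mathbb{P}}=\pi(\mathbb{P})$, $\bar{\dot y}=\pi(\dot y)$. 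By properness of $\mathbb{P}$, fix $q\le p$ that is $(M,\mathbb{P})$-generic.

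The crucial point is that $\bar{\mathbb{P}}$ is a \emph{countable} atomless forcing notion lying in $\bar M$: it is countable because $\pi$ is a bijection of the countable set $M$ onto $\bar M$, and atomless because below any splitting tree $T$ there are two incompatible conditions (pick $t_0\perp t_1$ in $T$ with $T\restriction t_0$ and $T\restriction t_1$ splitting; any nonempty splitting subtree of $T\restriction t_0\cap T\restriction t_1$ would have every branch extending both $t_0$ and $t_1$, which is impossible), and this is inherited by $\mathbb{P}$ and preserved by $\pi$. Since ZFC proves that the regular open algebra of any countable atomless poset is isomorphic to that of Cohen forcing $2^{<\omega}$, I may fix, inside $\bar M$, an isomorphism $e\colon\mathrm{RO}(2^{<\omega})\to\mathrm{RO}(\bar{\mathbb{P}})$. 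Let $\dot c$ be a $\mathbb{P}$-name, defined from the canonical name $\dot G$ for the generic filter together with $M,\pi,e\in V$, such that $q$ forces $\dot c$ to be the real coded by the $2^{<\omega}$-generic filter corresponding, via $e$, to the $\bar{\mathbb{P}}$-generic filter $\pi[\dot G\cap M]$.

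To verify this, fix a $\mathbb{P}$-generic $G$ with $q\in G$. Because $q$ is $(M,\mathbb{P})$-generic, $\pi[G\cap M]$ is $\bar{\mathbb{P}}$-generic over $\bar M$, and since $e\in\bar M$ the real $c:=\dot c[G]$ is Cohen-generic over $\bar M$. As $2^{<\omega}\subseteq M$ and $\pi$ fixes $2^{<\omega}$ pointwise, the subsets of $2^{<\omega}$ lying in $M$ are exactly those lying in $\bar M$, so Cohen-genericity over $M$ and over $\bar M$ coincide; hence $c$ is Cohen over $M$. Moreover $\bar M[\pi[G\cap M]]=\bar M[c]$, and as $\bar M$ and $M$ have the same reals and ``Cohen real'' is the same notion over both, $\bar M[c]$ and $M[c]$ have the same reals. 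Finally, for each $n$ the dense set of conditions deciding $\dot y(n)$ lies in $M$, hence is predense below $q$, hence is met inside $G\cap M$; therefore $\dot y[G]$ is determined by $\dot y\in M$ and $G\cap M$ alone, so $\dot y[G]\in\bar M[\pi[G\cap M]]=\bar M[c]$, and thus $\dot y[G]\in M[c]$. Altogether $q\Vdash$ ``$\dot c$ is a Cohen real over $M$ and $\dot y\in M[\dot c]$''.

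I do not expect a serious obstacle: the two external ingredients are preservation of properness under countable support iteration and the folklore fact that every countable atomless poset is forcing-equivalent to Cohen forcing. The only thing that genuinely needs care is the interplay between $M$ and its collapse $\bar M$ — which dense subsets of $2^{<\omega}$ and which reals they share, and what ``$M[\dot c]$'' means. The conceptual point worth keeping in mind is that, although $\mathbb{SP}$ and its iterations are very far from Cohen forcing as forcings over $V$, the collapsed iteration $\bar{\mathbb{P}}$ is a countable atomless poset inside $\bar M$ and hence Cohen-equivalent there, and this is precisely what converts $(M,\mathbb{P})$-genericity into a single-step Cohen extension of $M$.
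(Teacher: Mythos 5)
There is a genuine and fatal gap at the step ``I may fix, \emph{inside} $\bar M$, an isomorphism $e\colon \mathrm{RO}(2^{<\omega})\to \mathrm{RO}(\bar{\mathbb{P}})$''. The collapsed poset $\bar{\mathbb{P}}=\pi(\mathbb{P})$ is indeed countable \emph{in $V$}, but $\bar M$ believes it is uncountable: $H(\theta)$ satisfies ``$\mathbb{P}$ is uncountable'', so by elementarity $\bar M\models$ ``$\bar{\mathbb{P}}$ is uncountable'', and hence the folklore fact about countable atomless posets cannot be applied inside $\bar M$ --- no such $e$ exists there. Taking $e$ in $V$ instead does not help: the filter $\pi[G\cap M]$ is only guaranteed to meet the dense subsets of $\bar{\mathbb{P}}$ that \emph{lie in} $\bar M$, while the $e$-images of the dense subsets of $2^{<\omega}$ belonging to $M$ are in general not elements of $\bar M$, so you cannot conclude that the derived real is Cohen over $M$. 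This is not a repairable technicality: your argument uses nothing about $\mathbb{SP}$ beyond properness and atomlessness, so if it worked it would prove the same statement for every atomless proper forcing, and that statement is false. For random forcing, the generic real is random over $\bar M$ (the null Borel sets coded in $\bar M$ are among those coded in $V$), whereas $\bar M[c]$ for $c$ Cohen over $\bar M$ contains no real random over $\bar M$, since Cohen forcing adds no random reals; so the random real lies in no model $M[\dot c]$ of the required form.

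The conclusion really does depend on special combinatorics of splitting trees. The paper does not reprove it but cites Lemma~4.22 of \cite{Schilhan2020}, a stronger result whose proof is a fusion argument in the spirit of Lemma~\ref{lem:mcg}: one shrinks to a condition all of whose relevant tuples of branches are $T^n$-generic, i.e.\ Cohen-generic, over $M$, and combines this with the continuous reading of names (and its iteration analogue) to read $\dot y$ off a single Cohen real over $M$. That is the ingredient your outline replaces with the (unavailable) internal equivalence of $\bar{\mathbb{P}}$ with Cohen forcing.
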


\begin{proof}
This follows from a much stronger result proved in \cite[Lemma 4.22]{Schilhan2020}. See also the start of Section 4.3 and Lemma 2.2 in \cite{Schilhan2020}.
\end{proof}

\begin{proof}[Proof of Theorem~4.1]
Note first, that we can assume wlog that $V \models \operatorname{CH}$ since this is forced in the first $\omega_1$-many steps of any csi of nontrivial forcings. Then it is clear that $V^\mathbb{P} \models \mathfrak{d} = \aleph_1$ as $\mathbb{SP}$ is $\omega^\omega$-bounding. Next, we show that $V \cap 2^\omega$ is non-meager in $V^\mathbb{P}$. Suppose to the contrary that $V \cap 2^\omega$ is forced to be contained in a meager $F_\sigma$ set $\dot X$ by a condition $p \in \mathbb{P}$. $\dot X$ is going to be coded by a real $\dot y$ and by the previous lemma we find $q \leq p$, $M \preccurlyeq H(\theta)$ and $\dot c$ so that $$q \Vdash`` \dot c \text{ is a Cohen real over } M \text{ and } \dot y \in M[\dot c]".$$ But Cohen forcing preserves the set of ground model reals to be non-meager. In particular, letting $G$ be $\mathbb{P}$-generic over $V$ with $q \in G$, we have that $$M[\dot c[G]] \models `` \text{The } F_\sigma \text{ set coded by } \dot y \text{ does not cover the reals of } M". $$ 
By an absoluteness argument between $M[\dot c[G]]$ and $V[G]$, we find that there is a real $x \in M \subseteq V$ so that $x \notin \dot X[G]$, yielding a contradiction. 
Now that we have that $\mathfrak{d} = \operatorname{non}(\mathcal{M}) = \aleph_1$ in $V^\mathbb{P}$, we may also follow that $\operatorname{cof}(\mathcal{M}) = \aleph_1$ since $\operatorname{cof}(\mathcal{M}) = \max (\mathfrak{d},\operatorname{non}(\mathcal{M}))$ (see e.g. \cite[Theorem~2.2.11]{BartoszynskiJudah1995}).

The argument for showing that $\mathfrak{a} = \aleph_1$ is similar to the one we just made for $\operatorname{non}(\mathcal{M})$. Namely, we show that any Cohen-indestructible mad family is also $\mathbb{P}$-indestructible. To this end, let $\mathcal{A}$ be a mad family so that $\Vdash_{\mathbb{C}} ``\mathcal{A} \text{ is mad}"$. Towards a contradiction, suppose that $\dot y$ is a $\mathbb{P}$ name for an infinite subset of $\omega$ and that $p \Vdash \forall x \in \mathcal{A} (\vert x \cap y \vert < \omega)$. Then, once again, there is an elementary enough $M$, this time with $\mathcal{A} \in M$, a name $\dot c$ and $q \leq p$ so that $q$ forces that $\dot c$ is Cohen generic over $M$ and that $M[\dot c]$ contains $\dot y$. But then there must be $x \in \mathcal{A} \cap M$ so that $\vert y \cap x \vert = \omega$. Cohen-indestructible mad families exist under CH (see \cite[Exercise IV.4.12]{Kunen2011}). 

The reaping number $\mathfrak{r}$ is large in $V^\mathbb{P}$ as $\mathbb{SP}$ adds splitting reals. Finally, it was shown in \cite[Proposition 4.1.29]{Zapletal2008} that there is a condition $p \in \mathbb{SP}$ forcing that the set of ground model reals is null. This easily implies that in $V^{\mathbb{P}}$, $\operatorname{non}(\mathcal{N}) = \aleph_2$. 
\end{proof}

\begin{remark}
The arguments for $\operatorname{non}(\mathcal{M})$ and $\mathfrak{a}$ above are an instance of a more general result that shows that \emph{very tame invariants} (see \cite[Definition 6.1.9]{Zapletal2008}) that Cohen forcing keeps small, stay small after forcing with $\mathbb{SP}$. More generally, it is not hard to apply the theory developed in \cite[6.1]{Zapletal2008} to the $\sigma$-ideal of non $\omega$-splitting sets of reals to get similar results for other forcing notions adding $\omega$-splitting reals.
\end{remark}

\begin{remark}
\label{rem:zapsacks}
The fact that some condition in $\mathbb{SP}$ forces the ground model reals to be null already shows that below this condition $\mathbb{SP}$ cannot have the Sacks property. This is because the Sacks property keeps the ground model reals non-null (in fact it keeps $\operatorname{cof}(\mathcal{N})$ small, see \cite[2.3]{BartoszynskiJudah1995}).
\end{remark}

Other than the results above, we can also show that P-points exist in $V^\mathbb{P}$ (\cite[Section 4.5.2]{Schilhan2020a}). This uses the technique developed in \cite{Schilhan2020} in an essential way. The countable support iteration of Sacks forcing, for instance, outright preserves ground model P-points, but this is an impossible task for a forcing adding splitting reals. Rather, $\mathbb{P}$ can preserve the union of $\aleph_1$ many ground model Borel sets to be reinterpreted as a P-point in the extension. To our best knowledge, this is the only existence result for P-points of this kind. 

\bibliographystyle{plain}

\end{document}